\theoremstyle{break}
\newtheorem{theorem}{Theorem}
\newtheorem{corollary}{Corollary}
\newtheorem{proposition}{Proposition}
\newtheorem{lemma}{Lemma}
\newtheorem{definition}{Definition}
\newtheorem{example}{Example}
\newtheorem{remark}{Remark}
\newcommand{\lran}{\longrightarrow}
\def\lint#1{\left[{#1}\right[}
\def\rint#1{\left]{#1}\right]}
\def\cint#1{\left[{#1}\right]}
\def\opint#1{\left]{#1}\right[}
\def\uint{[0,1]}
\def\ouint{\left]0,1\right[}
\def\prooftxt{\mbox{\large\sc proof: }}
\def\konproof{\rm\hspace*{\fill}$\Box$}
\newenvironment{proof}{\par\smallskip\noindent\prooftxt }%
{\konproof\par\vspace*{6pt}}
\newcounter{inst}
\newenvironment{mylist}{\begin{list}{(\roman{enumi})}{\usecounter{enumi}%
                        \setlength{\listparindent}{0pt}%
                        \setlength{\labelsep}{0.4em}%
                        \setlength{\labelwidth}{2.1em}
                        \setlength{\leftmargin}{2em}%
                        \setlength{\itemsep}{-0.8ex}%
                        \setlength{\topsep}{0.4ex}%
                        \setlength{\rightmargin}{0pt}}}{\end{list}}
\date{}
\author{
{\bf Andrea Mesiarov\' a-Zem\' ankov\' a} \\ \ Mathematical Institute, Slovak Academy of Sciences \\
Bratislava, SLOVAKIA \\
zemankova@mat.savba.sk
}
\title{\bf   Ordinal sums of representable uninorms}
\begin{document}

\maketitle \thispagestyle{empty}

\begin{abstract}
We investigate properties of an ordinal sum of uninorms introduced in \cite{genuni} in the case that the summands are proper representable uninorms.
We show sufficient and necessary conditions for a uninorm to be an   ordinal sum of representable uninorms.

{\bf Keywords:}  uninorm, representable uninorm, t-norm, ordinal sum
\end{abstract}

\section{Introduction}
\label{sec1}

Triangular norms, t-conorms and uninorms (\cite{AFS06,KMP00}) are applied in many domains and therefore  several construction methods for such aggregation functions were developed. Among others, let us recall the construction using the additive generators and the ordinal sum construction.
 A triangular norm  is a binary function $T \colon \uint^2 \lran \uint $ which is commutative,
associative, non-decreasing in both variables and $1$ is its neutral element. Due to the associativity $n$-ary form of any t-norm is uniquely given and  thus it can be extended to an aggregation function working on  $\bigcup_{n\in \mathbb{N}}\uint^n.$
Dual functions to t-norms are t-conorms. A triangular conorm  is a binary function $C \colon \uint^2 \lran \uint $ which is commutative,
associative, non-decreasing in both variables and $0$ is its neutral element.
The duality between t-norms and t-conorms is expressed by the fact that from any t-norm $T$ we can obtain its dual t-conorm $C$ by the equation
$$C(x,y)=1-T(1-x,1-y)$$
and vice-versa.

\begin{proposition}
Let $t\colon \uint\lran \cint{0,\infty}$ be a continuous strictly decreasing function such that $t(1)=0.$ Then the binary  operation  $T \colon \uint^2 \lran \uint $  given by
$$T(x,y)=t^{-1}(\min(t(0),t(x)+t(y)))$$
is a continuous t-norm. The function $t$ is called an \emph{additive generator} of $T.$
\end{proposition}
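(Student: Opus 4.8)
The plan is to verify the four defining properties of a t-norm---commutativity, associativity, monotonicity, and the neutral element $1$---together with continuity, working throughout with the observation that, since $t$ is continuous and strictly decreasing with $t(1)=0$, it is a bijection from $\uint$ onto $\cint{0,t(0)}$ whose inverse $t^{-1}$ is continuous and strictly decreasing. The argument $\min(t(0),t(x)+t(y))$ always lies in $\cint{0,t(0)}$, because $t(x)+t(y)\ge 0$ and the outer $\min$ caps it at $t(0)$, so $T$ is well defined and takes values in $\uint$.

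Commutativity is immediate from the symmetry of $t(x)+t(y)$. For the neutral element, I would compute $T(x,1)=t^{-1}(\min(t(0),t(x)+t(1)))=t^{-1}(\min(t(0),t(x)))=t^{-1}(t(x))=x$, where the second equality uses $t(1)=0$ and the third uses $t(x)\le t(0)$. Monotonicity then follows by composing monotone maps: if $x_1\le x_2$ then $t(x_1)\ge t(x_2)$, hence $\min(t(0),t(x_1)+t(y))\ge\min(t(0),t(x_2)+t(y))$, and applying the decreasing map $t^{-1}$ reverses the inequality to give $T(x_1,y)\le T(x_2,y)$.

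The main obstacle is associativity, and here the plan is to reduce both $T(T(x,y),z)$ and $T(x,T(y,z))$ to the same symmetric expression. Writing $a=T(x,y)$ and using $t(t^{-1}(s))=s$ for $s\in\cint{0,t(0)}$, I would first obtain $t(a)=\min(t(0),t(x)+t(y))$, so that
$$t(a)+t(z)=\min\bigl(t(0)+t(z),\,t(x)+t(y)+t(z)\bigr).$$
Taking the outer $\min$ with $t(0)$ and using the absorption identity $\min(t(0),t(0)+t(z))=t(0)$, valid since $t(z)\ge 0$, collapses this to
$$T(T(x,y),z)=t^{-1}\bigl(\min(t(0),\,t(x)+t(y)+t(z))\bigr).$$
Since the right-hand side is symmetric in $x,y,z$, the identical computation for $T(x,T(y,z))$ yields the same value, establishing associativity. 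I expect this min-plus manipulation, rather than any deep idea, to be the only delicate point, the subtlety being the correct use of the absorption identity when $t(0)$ is infinite.

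Finally, continuity follows because $T$ is a composition of continuous maps: $(x,y)\mapsto t(x)+t(y)$ into $\cint{0,\infty}$, then $s\mapsto\min(t(0),s)$ into $\cint{0,t(0)}$, then $t^{-1}$. When $t(0)=\infty$ the composition is read on the extended half-line with the usual conventions (in particular $t^{-1}(\infty)=0$), and continuity is preserved; the $\min$ with $t(0)$ is precisely what keeps the argument inside the domain of $t^{-1}$.
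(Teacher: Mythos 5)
Your proof is correct and complete. Note that the paper itself gives no proof of this proposition: it is stated as classical background (the representation of continuous t-norms by additive generators, quoted from the literature), so there is no argument of the paper to compare against; yours is the standard one. The essential points are all handled properly: $t$ is a homeomorphism of $\uint$ onto $\cint{0,t(0)}$ so that $t(t^{-1}(s))=s$ there; the absorption identity $\min(t(0),t(0)+t(z))=t(0)$ collapses both $T(T(x,y),z)$ and $T(x,T(y,z))$ to $t^{-1}\bigl(\min(t(0),t(x)+t(y)+t(z))\bigr)$; and the extended-arithmetic reading (addition being jointly continuous on $\cint{0,\infty}^2$ and $t^{-1}(\infty)=0$) takes care of the case $t(0)=\infty$ for both associativity and continuity.
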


An additive generator of a continuous t-norm $T$ is uniquely determined up to a positive multiplicative constant. Similarly, an additive generator of a continuous t-conorm $C$ is a continuous strictly increasing function $c\colon \uint\lran \cint{0,\infty}$ such that $c(0)=0.$

Now let us recall an ordinal sum construction for t-norms and t-conorms \cite{KMP00}.

\begin{proposition} Let $K$ be a finite or countably infinite index set and
let $(\opint{a_k,b_k})_{k\in K}$ ($(\opint{c_k,d_k})_{k\in K}$) be a disjoint system of open subintervals of $\uint.$ Let $(T_k)_{k\in K}$ ($(C_k)_{k\in K}$) be
a system of  t-norms (t-conorms). Then
the ordinal sum $T =(\langle a_k, b_k,T_k \rangle \mid  k \in K)$ ($C =(\langle c_k, d_k,C_k \rangle \mid  k \in K)$) given by
$$T(x,y)=\begin{cases} a_k +(b_k-a_k) T_k(\frac{x-a_k}{b_k-a_k},\frac{y-a_k}{b_k-a_k}) &\text{if $(x,y)\in \lint{a_k,b_k}^2 ,$} \\
\min(x,y) &\text{else} \end{cases}$$
and
$$C(x,y)=\begin{cases} c_k +(d_k-c_k) C_k(\frac{x-c_k}{d_k-c_k},\frac{y-c_k}{d_k-c_k}) &\text{if $(x,y)\in \rint{c_k,d_k}^2 ,$} \\
\max(x,y) &\text{else} \end{cases}$$
is a  t-norm (t-conorm). The t-norm $T$ (t-conorm $C$) is continuous if and only if all summands $T_k$ ($C_k$) for $k\in K$ are continuous.
\end{proposition}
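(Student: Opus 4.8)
Throughout I write $\phi_k\colon\uint\lran\cint{a_k,b_k}$, $\phi_k(u)=a_k+(b_k-a_k)u$, an increasing bijection, so the definition reads $T(x,y)=\phi_k\bigl(T_k(\phi_k^{-1}(x),\phi_k^{-1}(y))\bigr)$ when $(x,y)\in\lint{a_k,b_k}^2$ and $T(x,y)=\min(x,y)$ otherwise. The plan is first to dispose of the t-conorm case by duality: with $C(x,y)=1-T(1-x,1-y)$ and the reflection $x\mapsto 1-x$ (a homeomorphism of $\uint$ carrying $\opint{a_k,b_k}$ to $\opint{1-b_k,1-a_k}$ and each $T_k$ to its dual t-conorm), the t-conorm ordinal sum is exactly the dual of the t-norm ordinal sum, so commutativity, monotonicity, associativity and continuity all transfer. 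Hence it suffices to treat $T$. Commutativity is immediate since $T_k$ and $\min$ are commutative and the condition $(x,y)\in\lint{a_k,b_k}^2$ is symmetric; and since $1\notin\lint{a_k,b_k}$ for every $k$ (as $b_k\le 1$), the pair $(x,1)$ lies in no square, so $T(x,1)=\min(x,1)=x$, giving the neutral element. I record two facts to be used repeatedly: $T_k(u,v)\le\min(u,v)$ yields $T(x,y)\le\min(x,y)$ everywhere, and, the open intervals being pairwise disjoint, each point lies in \emph{at most one} half-open interval $\lint{a_k,b_k}$.

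For monotonicity I would fix $z$ and study $x\mapsto T(x,z)$. If $z\notin\lint{a_k,b_k}$ for all $k$ then $(x,z)$ lies in no square and $T(\cdot,z)=\min(\cdot,z)$ is non-decreasing. If $z\in\lint{a_j,b_j}$ for the unique such index $j$, then on $x\in\lint{a_j,b_j}$ the map is $\phi_j\circ T_j(\cdot,\phi_j^{-1}(z))\circ\phi_j^{-1}$, non-decreasing by monotonicity of $T_j$, while off this interval it equals $\min(x,z)$. The only point to check is that the two pieces match monotonically at the endpoints, and this is exactly where the annihilator and neutral element of a t-norm enter: $\phi_j(T_j(0,\phi_j^{-1}(z)))=a_j=\lim_{x\to a_j^-}\min(x,z)$, and $\lim_{x\to b_j^-}\phi_j(T_j(\phi_j^{-1}(x),\phi_j^{-1}(z)))=\phi_j(\phi_j^{-1}(z))=z=\min(x,z)$ for $x\ge b_j$. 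Thus $T(\cdot,z)$ is non-decreasing, and by commutativity $T$ is non-decreasing in both arguments.

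The substantial step is associativity, $T(T(x,y),z)=T(x,T(y,z))$, which I would reduce to three \emph{block lemmas} read off directly from the definition: (i) for $x,y\in\lint{a_k,b_k}$ one has $T(x,y)\in\lint{a_k,b_k}$ and $T$ restricted to $\lint{a_k,b_k}$ is $\phi_k$-conjugate to $T_k$ on $\luint$, hence associative there; (ii) if $x\in\lint{a_k,b_k}$ and $w\ge b_k$ then $T(x,w)=x$; (iii) if $x\in\lint{a_k,b_k}$ and $w\le a_k$ then $T(x,w)=w$. Granting these, the identity is settled by a finite case analysis on the block-membership pattern of $x,y,z$: when all three lie in one $\lint{a_k,b_k}$ both sides reduce to $T_k$ by (i); in every remaining configuration lemmas (ii)--(iii) collapse each mixed application of $T$ to one of its arguments (using that two distinct blocks are linearly ordered on the line), so both sides reduce to a nested $\min$, which is associative. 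I expect the main obstacle to be bookkeeping this split cleanly — in particular the borderline points $x=a_k$, where $\lint{a_k,b_k}$ meets the $\min$-region yet $T(a_k,\cdot)=a_k$ on the block, so that (ii)--(iii) apply without gaps, together with the constant use of $T(x,y)\le\min(x,y)$ to keep the intermediate values $T(x,y)$ and $T(y,z)$ in the expected block.

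Finally, for continuity, if every $T_k$ is continuous then $T$ is obtained by gluing the continuous maps $\phi_k\circ T_k\circ(\phi_k^{-1}\times\phi_k^{-1})$ on $\cint{a_k,b_k}^2$ to the continuous function $\min$ on the complement; the endpoint computations of the monotonicity step show the pieces agree on the shared boundaries, and at an accumulation point of the interval system one uses $0\le\min(x,y)-T(x,y)\le b_k-a_k$ on the $k$-th block, which forces $T\to\min$ there, so $T$ is continuous. Conversely, if some $T_k$ is discontinuous then, $\phi_k$ being a homeomorphism, $T$ restricted to $\opint{a_k,b_k}^2$ is discontinuous, hence so is $T$. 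The corresponding t-conorm assertions follow through the duality fixed at the outset.
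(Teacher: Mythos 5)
The paper itself gives no proof of this proposition---it is recalled from Klement--Mesiar--Pap [KMP00]---so your attempt can only be measured against the standard textbook argument, which is exactly the route you take: duality, the three block lemmas, a case analysis for associativity, and gluing for continuity. That framework is sound, and your associativity reduction does close (one small imprecision: when $x,z$ lie in the same block $\lint{a_k,b_k}$ and $y$ lies above it, lemmas (ii)--(iii) collapse both sides to the block value $T(x,z)$, not to a nested $\min$; equality still holds, but not for the reason your summary states).

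There are, however, two places where a claim you make is false for \emph{discontinuous} summands, and the second is a real gap. First, in the monotonicity step you assert $\lim_{x\to b_j^-}\phi_j\bigl(T_j(\phi_j^{-1}(x),\phi_j^{-1}(z))\bigr)=z$; this requires continuity of $T_j$, which is not assumed in the main part of the statement. It is harmless there, since monotone matching only needs $T(x,z)\le z=\min(x',z)$ for $x<b_j\le x'$, which follows from your fact $T\le\min$---but the limit claim itself should be deleted. Second, and more seriously, your converse continuity argument says: if $T_k$ is discontinuous then $T$ restricted to $\opint{a_k,b_k}^2$ is discontinuous. This step fails. Take $T_k$ the drastic product ($T_k(u,v)=0$ unless $\max(u,v)=1$, in which case $T_k(u,v)=\min(u,v)$): it is a discontinuous t-norm, yet its restriction to $\opint{0,1}^2$ is constant $0$, hence continuous, and correspondingly $T$ on $\opint{a_k,b_k}^2$ is the constant $a_k$. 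The discontinuity of the ordinal sum lives on the \emph{closed} square: for $a_k<x<b_k$ one has $T(x,y)=a_k$ for all $y<b_k$ but $T(x,b_k)=x$. The repair uses the boundary agreement you already established: on $\cint{a_k,b_k}^2$ the function $T$ coincides with $\phi_k\circ T_k\circ(\phi_k^{-1}\times\phi_k^{-1})$ (the edges $x=b_k$, $y=b_k$ agree by the neutral element of $T_k$), so a discontinuity of $T_k$ anywhere in $\uint^2$ transfers through the homeomorphism $\phi_k\times\phi_k$ to a discontinuity of $T$. With that fix, and with the summability $\sum_k(b_k-a_k)\le 1$ made explicit in your accumulation-point estimate for the forward direction, the proof is complete and is the standard one.
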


More details on t-norms and t-conorms can be found in \cite{AFS06,KMP00}. In order to model bipolar behaviour, uninorms were introduced in \cite{YR96} as binary functions on $\uint$ which are commutative,
associative, non-decreasing in both variables and have a neutral element $e\in \ouint$ (see also \cite{FYR97}). A uninorm can be also taken as a bipolar t-conorm on $\cint{-1,1}$ (see \cite{jacon}), i.e., a bipolar operation that is disjunctive with respect to the neutral point $0$ (i.e., aggregated values diverge from the neutral point). If we take uninorm in a broader sense, i.e., if for a neutral element we have  $e\in \uint,$ then the class of uninorms covers also the class of t-norms and the class of t-conorms. In order the stress that we assume a uninorm with $e\in \opint{0,1}$ we will call such a uninorm \emph{proper}. For each uninorm the value $U(1,0)\in \{0,1\}$ is the annihilator of $U.$ A uninorm is called \emph{conjunctive} (\emph{disjunctive}) if $U(1,0)=0$ ($U(1,0)=1$). Due to the associativity we can uniquely define $n$-ary form of any uninorm for any $n\in\mathbb{N}$ and therefore in some proofs we will use ternary form instead of binary, where suitable.

For each uninorm $U$ with the  neutral element $e\in \uint,$ the restriction of $U$ to $\cint{0,e}^2$ is a t-norm  on $\cint{0,e}^2$ (i.e, a linear transformation of some t-norm $T_U$)
and the restriction of $U$ to $\cint{e,1}^2$ is a t-conorm  on $\cint{e,1}^2$  (i.e, a linear transformation of some t-conorm $C_U$).  Moreover, $\min(x,y)\leq U(x,y)\leq \max(x,y)$ for all
$(x,y)\in \cint{0,e}\times \cint{e,1}\cup \cint{e,1}\times \cint{0,e}.$

\begin{definition}
A uninorm  $U \colon \uint^2 \lran \uint $ is called
\emph{internal} if  $U(x,y)\in \{x,y\}$ for all $(x,y)\in \uint^2.$
\end{definition}

The following easy lemma was shown in \cite{genuni}.

\begin{lemma}
Let $U\colon \uint^2 \lran \uint $  be a uninorm such that $T_U=\min$ and $C_U=\max.$ Then $U$ is internal. \label{lemmm}
\end{lemma}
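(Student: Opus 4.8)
The plan is to verify internality region by region. On the two squares $\cint{0,e}^2$ and $\cint{e,1}^2$ the claim is immediate: since $T_U=\min$, the restriction of $U$ to $\cint{0,e}^2$ equals $\min$, and since $C_U=\max$, its restriction to $\cint{e,1}^2$ equals $\max$; both of these are internal. It therefore remains to treat the mixed region, and by commutativity it suffices to consider a pair $(x,y)$ with $x\le e\le y$.

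Fix such a pair and set $z:=U(x,y)$. From the inequality $\min(x,y)\le U(x,y)\le \max(x,y)$, valid on the mixed region, we obtain $x\le z\le y$, so in particular $z$ is comparable with $e$. I would then split into the two cases $z\le e$ and $z\ge e$ and show $z=x$ in the first and $z=y$ in the second.

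For the case $z\le e$, the key is an associativity computation combined with the idempotency forced by $T_U=\min$. Since $x\le e$, we have $U(x,x)=\min(x,x)=x$, hence by associativity
$$U(x,z)=U(x,U(x,y))=U(U(x,x),y)=U(x,y)=z.$$
On the other hand $x\le e$ and $z\le e$ place $(x,z)$ in $\cint{0,e}^2$, where $U=\min$, so $U(x,z)=\min(x,z)=x$ (using $x\le z$). Comparing the two evaluations gives $z=x$. The case $z\ge e$ is entirely symmetric: now $U(y,y)=\max(y,y)=y$ since $y\ge e$, and associativity together with commutativity yields $U(y,z)=U(y,U(x,y))=U(U(y,y),x)=U(x,y)=z$, while $(y,z)\in\cint{e,1}^2$ forces $U(y,z)=\max(y,z)=y$; hence $z=y$.

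Combining the cases, $U(x,y)\in\{x,y\}$ throughout the mixed region, which together with the two squares proves internality. I do not expect a genuine obstacle here; the only point requiring care is recognizing that the right associativity/idempotency trick closes each case, and noting that the value $z=e$ is harmlessly absorbed into either case.
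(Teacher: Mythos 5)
Your proof is correct. Note that the paper itself gives no argument for this lemma: it is quoted as "shown in \cite{genuni}" (a reference under review), so there is no in-paper proof to compare against. Your argument is the natural self-contained one: the two squares $\cint{0,e}^2$ and $\cint{e,1}^2$ are immediate from $T_U=\min$, $C_U=\max$; on the mixed region you correctly invoke the standard bound $\min(x,y)\leq U(x,y)\leq \max(x,y)$ (stated in the paper's introduction, and itself a one-line consequence of monotonicity and the neutral element), which is genuinely needed to pin down $\min(x,z)=x$ in the case $z\leq e$ (and dually), and the idempotency-plus-associativity computation $U(x,U(x,y))=U(U(x,x),y)$ then closes each case. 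The only cosmetic remark is that the case split $z\leq e$ versus $z\geq e$ needs no justification from the bound at all, since any $z$ is comparable with $e$; the bound is used only afterwards, exactly where you use it.
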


In the following result we see that from any pair of a t-norm and a t-conorm we can construct a minimal and maximal uninorm with the given
underlying functions.

\begin{proposition}
Let  $T \colon \uint^2 \lran \uint $ be a t-norm and  $C \colon \uint^2 \lran \uint $ a t-conorm and assume $e\in \uint.$ Then the two functions  $U_{\min},U_{\max} \colon \uint^2 \lran \uint $ given by
$$U_{\min}(x,y)=\begin{cases} e\cdot T(\frac{x}{e},\frac{y}{e}) &\text{if $(x,y)\in \cint{0,e}^2,$ } \\
e+ (1-e)\cdot C(\frac{x-e}{1-e},\frac{y-e}{1-e}) &\text{if $(x,y)\in \cint{e,1}^2,$ } \\
\min(x,y) &\text{otherwise}\end{cases}$$ and
$$U_{\max}(x,y)=\begin{cases} e\cdot T(\frac{x}{e},\frac{y}{e}) &\text{if $(x,y)\in \cint{0,e}^2,$ } \\
e+ (1-e)\cdot C(\frac{x-e}{1-e},\frac{y-e}{1-e}) &\text{if $(x,y)\in \cint{e,1}^2,$ } \\
\max(x,y) &\text{otherwise}\end{cases}$$ are uninorms. We will denote the set of all uninorms of the first type by $\mathcal{U}_{\min}$ and of the second type by $\mathcal{U}_{\max}.$
\end{proposition}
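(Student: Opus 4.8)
The plan is to verify the four defining properties of a uninorm --- commutativity, neutrality of $e$, monotonicity, and associativity --- for $U_{\min}$; the argument for $U_{\max}$ is the mirror image, obtained by swapping the two sides of $e$, the functions $\min$ and $\max$, and the inequalities $T\le\min$ and $C\ge\max$. I treat the generic case $e\in\opint{0,1}$; for $e=1$ (resp. $e=0$) the construction collapses to the t-norm $T$ (resp. the t-conorm $C$) and nothing new is needed. Commutativity is immediate, since each of the three branches is symmetric in $x$ and $y$ ($T$, $C$, $\min$ are commutative and the regions $\cint{0,e}^2$, $\cint{e,1}^2$ and their complement are invariant under swapping coordinates). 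That $e$ is neutral is a one-line check: for $x\le e$ the pair $(x,e)$ lies in $\cint{0,e}^2$ and $U_{\min}(x,e)=e\,T(x/e,1)=x$ because $1$ is neutral for $T$; for $x\ge e$ it lies in $\cint{e,1}^2$ and $U_{\min}(x,e)=e+(1-e)C((x-e)/(1-e),0)=x$ because $0$ is neutral for $C$.

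For monotonicity I would fix $y$ and show that $x\mapsto U_{\min}(x,y)$ is non-decreasing, the other variable following by commutativity. Inside each square monotonicity is inherited from $T$ and $C$, and on the mixed rectangles $\min$ is non-decreasing, so only the transitions across the lines $x=e$ and $y=e$ need attention. The crucial point is that the square branches, evaluated at a boundary, reproduce exactly the value dictated by neutrality (e.g. for $y\le e$ the $T$-branch gives $U_{\min}(e,y)=y$, matching $\min(e,y)=y$ on the adjacent rectangle), so across each boundary the function is continuous or jumps upward, never downward. Running this through the ways a horizontal line $y=\mathrm{const}$ meets the region boundaries establishes monotonicity.

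The main obstacle is associativity, $U_{\min}(U_{\min}(x,y),z)=U_{\min}(x,U_{\min}(y,z))$, which I would handle by first locating intermediate values relative to $e$ and then splitting into cases. Two structural facts make the bookkeeping manageable. Since $T(a,b)\le\min(a,b)$ we have $U_{\min}\le\min\le e$ on $\cint{0,e}^2$; on a mixed rectangle $U_{\min}(a,b)=\min(a,b)$ is the argument lying in $\cint{0,e}$; and since $C(a,b)\ge\max(a,b)$ we have $U_{\min}\ge\max\ge e$ on $\cint{e,1}^2$. Together these give the location rule: $U_{\min}(a,b)\ge e$ iff both $a\ge e$ and $b\ge e$, while $U_{\min}(a,b)\le e$ iff $a\le e$ or $b\le e$.

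With the location rule in hand I would run the case analysis on the positions of $x,y,z$ with respect to $e$, using neutrality to absorb any argument equal to $e$ (which trims the cases). When $x,y,z$ all lie in $\cint{0,e}$ (resp. all in $\cint{e,1}$), both triple products equal $e$ times the ternary form of $T$ (resp. the rescaled ternary form of $C$), so associativity descends directly from that of $T$ (resp. $C$). In every mixed configuration the location rule places each inner product $U_{\min}(x,y)$ and $U_{\min}(y,z)$ on the correct side of $e$, and both association orders then reduce to the same explicit value: the scaled $T$-combination of the arguments lying in $\cint{0,e}$, the arguments in $\cint{e,1}$ being swallowed by the $\min$ on the mixed rectangles. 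Checking that the two orders yield this common value in each mixed case is the heart of the proof --- routine but necessarily exhaustive, and exactly where the inequalities $T\le\min$ and $C\ge\max$ are indispensable.
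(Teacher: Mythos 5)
Your proof is correct, but there is nothing in the paper to compare it against: this proposition is stated as a recalled, known construction (it is the $U_{\min}$/$U_{\max}$ characterization going back to \cite{FYR97}) and the paper supplies no proof of it. Your blind verification is sound and self-contained. Commutativity and neutrality are indeed immediate; your monotonicity argument correctly reduces everything to the crossings of the lines $x=e$ and $y=e$, where the $T$- and $C$-branches evaluate, by neutrality of $1$ for $T$ and of $0$ for $C$, to exactly the value of the adjacent $\min$ (resp.\ $\max$) branch, so every discontinuity is an upward jump. The heart of the matter, associativity, is also handled correctly: your location rule --- $U_{\min}(a,b)\ge e$ if and only if $a\ge e$ and $b\ge e$, and $U_{\min}(a,b)\le e$ if and only if $a\le e$ or $b\le e$, resting precisely on $T\le\min$ and $C\ge\max$ --- is the right key lemma, and with it each mixed configuration of $x,y,z$ (finitely many, after absorbing arguments equal to $e$ by neutrality) collapses in both association orders to the rescaled $T$-value of the arguments lying in $\cint{0,e}$, with the dual reduction to the rescaled $C$-value for $U_{\max}$; I verified that all the mixed cases come out as you claim. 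The only point worth making explicit in a final write-up is the degenerate situation $e\in\{0,1\}$, where the rescalings $x/e$ and $(x-e)/(1-e)$ are undefined; your remark that the construction then collapses to $T$ or $C$ disposes of it.
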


Similarly as in the case of t-norms and t-conorms we can construct uninorms using additive generators (see  \cite{FYR97}).

\begin{proposition}
Let $f\colon \uint\lran\cint{-\infty,\infty},$  $f(0)=-\infty,$ $f(1)=\infty$ be a continuous strictly increasing function.
Then a binary function $U \colon \uint^2 \lran \uint $ given by
$$U(x,y)=f^{-1}(f(x)+f(y)),$$ where $f^{-1}\colon \cint{-\infty,\infty}\lran \uint$ is an inverse function to $f,$ is a uninorm, which will be called a \emph{representable} uninorm.
\end{proposition}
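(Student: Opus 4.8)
The plan is to verify the four defining properties of a uninorm — commutativity, the existence of a neutral element, monotonicity, and associativity — directly from the representation $U=f^{-1}\circ\oplus\circ(f\times f)$, where $\oplus$ denotes (extended) addition on $\cint{-\infty,\infty}$. First I record that, being continuous, strictly increasing with $f(0)=-\infty$ and $f(1)=\infty$, the map $f$ is a bijection of $\uint$ onto $\cint{-\infty,\infty}$; hence $f^{-1}$ is a well-defined, continuous, strictly increasing bijection satisfying $f\circ f^{-1}=\mathrm{id}$ and $f^{-1}\circ f=\mathrm{id}$, with $f^{-1}(-\infty)=0$ and $f^{-1}(\infty)=1$. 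The sum $f(x)+f(y)$ then lies in $\cint{-\infty,\infty}$ and is unambiguous except at the two pairs with $\{x,y\}=\{0,1\}$, where it takes the indeterminate form $-\infty+\infty$. There I fix a convention $-\infty+\infty=\infty+(-\infty)=s$ with $s\in\{-\infty,\infty\}$; this fixes the annihilator $U(1,0)=f^{-1}(s)\in\{0,1\}$ and turns $\oplus$ into a totally defined, commutative operation, whence commutativity of $U$ is immediate.

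For the neutral element I set $e=f^{-1}(0)$. Since $f(0)<0<f(1)$ and $f$ is a continuous bijection, $e$ exists, is unique and lies in $\ouint$, so $U$ is proper. Because $f(e)=0$ and none of the evaluations $U(x,e)$ meets the indeterminate form (as $e\neq 0,1$), we obtain $U(x,e)=f^{-1}(f(x)+0)=f^{-1}(f(x))=x$ for every $x\in\uint$. Monotonicity in each argument follows from composing monotone maps: $f$ is increasing, $\oplus$ is non-decreasing in each coordinate under either choice of $s$, and $f^{-1}$ is increasing.

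The main obstacle is associativity, and here the convention on $s$ must pull its weight. Away from the boundary the computation is routine: using $f\circ f^{-1}=\mathrm{id}$,
$$U(U(x,y),z)=f^{-1}\big(f(x)\oplus f(y)\oplus f(z)\big)=U(x,U(y,z)),$$
the central expression being symmetric in $x,y,z$. The subtlety is that the nested evaluations can produce the indeterminate form $-\infty+\infty$ at an intermediate step, so the identity above is legitimate only if $\oplus$ is genuinely associative on the extended line — it is precisely here that an inconsistent reading of $-\infty+\infty$ would break the argument.

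I would therefore isolate as the key lemma the claim that, with $s$ chosen so that one infinity is \emph{absorbing} — $s=\infty$ makes $\infty\oplus t=\infty$ for all $t\in\cint{-\infty,\infty}$, and $s=-\infty$ makes $-\infty\oplus t=-\infty$ for all such $t$ — the operation $\oplus$ is associative. This reduces to a short finite case analysis according to which of the three arguments are infinite: if the absorbing infinity occurs among them, both $(a\oplus b)\oplus c$ and $a\oplus(b\oplus c)$ collapse to it, and otherwise the special rule is never triggered and ordinary extended addition is used. Transporting this associativity through the conjugation by $f$ and $f^{-1}$, and noting that $f$ carries the absorbing infinity to the annihilator and back, yields associativity of $U$ and completes the proof.
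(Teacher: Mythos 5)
Your proof is correct; the paper states this proposition without proof, recalling it from the literature (\cite{FYR97}), so there is no internal argument to compare against, and your verification via the four uninorm axioms transported through the conjugation by $f$ is the standard one. You also correctly isolate and resolve the one subtlety the bare statement glosses over: fixing the convention $-\infty+\infty=s\in\{-\infty,\infty\}$ with the chosen infinity absorbing, which makes extended addition on $\cint{-\infty,\infty}$ associative (your three-argument case analysis is complete) and yields $U(1,0)=f^{-1}(s)\in\{0,1\}$ as the annihilator, consistent with the paper's remark that every uninorm is conjunctive or disjunctive.
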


Note that if we relax the monotonicity of the additive generator then the neutral element will be lost and by relaxing the condition  $f(0)=-\infty,$ $f(1)=\infty$ the associativity will be lost. In  \cite{uniru} (see also \cite{jacon}) we can find the following result.

\begin{proposition}
Let $U\colon \uint^2 \lran \uint$ be a uninorm continuous everywhere on the unit square expect of the two points $(0,1)$ and $(1,0).$ Then $U$ is representable, i.e., there exists
such a function $u\colon \uint \lran \cint{-\infty,\infty}$ with $u(e)=0,$ $u(0)=-\infty,$ $u(1)=\infty$ that $U(x,y) = u^{-1}(u(x)+u(y)).$
\label{prouni}
\end{proposition}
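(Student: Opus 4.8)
The plan is to recover the additive generator $u$ by gluing an additive generator of the underlying t-norm $T_U$ on $\cint{0,e}$ to one of the underlying t-conorm $C_U$ on $\cint{e,1}$, and then to fix the single remaining degree of freedom, namely the ratio of the two scaling constants, by exploiting associativity. The final verification on the mixed squares $\cint{0,e}\times\cint{e,1}$ and $\cint{e,1}\times\cint{0,e}$ is then done by an involution trick.

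First I would extract the preliminary, and technically most delicate, consequences of the hypothesis. The restrictions of $U$ to $\cint{0,e}^2$ and $\cint{e,1}^2$ are a continuous t-norm $T_U$ and a continuous t-conorm $C_U$, and I claim both are \emph{strict}. The place where the hypothesis genuinely bites is that continuity is assumed at \emph{every} boundary point other than $(0,1)$ and $(1,0)$, in particular at $(e,1)$ and $(1,e)$, where $U(e,1)=1$ and $U(1,e)=1$. Using continuity at these non-excluded points together with associativity one shows $U(x,1)=1$ and $U(x,0)=0$ for every $x\in\ouint$: this rules out nilpotent or idempotent behaviour of $T_U$ and $C_U$ (such behaviour would force a discontinuity on the boundary of a mixed square, e.g.\ at $(e,1)$), so both underlying operations are strict, and it also shows that $U$ maps $\ouint^2$ into $\ouint$ while $U(x,\cdot)$ attains every value in $\ouint$. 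By the intermediate value theorem and strict monotonicity there is then, for each $x\in\ouint$, a unique $g(x)\in\ouint$ with $U(x,g(x))=e$; here $g$ is a continuous strictly decreasing involution interchanging $\opint{0,e}$ and $\opint{e,1}$ and fixing $e$.

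Next I would show $g$ is an anti-automorphism, i.e.\ $g(U(a,b))=U(g(a),g(b))$, which is the key structural fact. This follows purely from associativity, commutativity and the uniqueness of $g$: since $U(U(a,b),U(g(a),g(b)))=U(U(a,g(a)),U(b,g(b)))=U(e,e)=e$, the element $U(g(a),g(b))$ is the $e$-complement of $U(a,b)$. Now take strict generators: a decreasing $p$ with $p(e)=0$ generating $U$ on $\cint{0,e}$ (so $p\lran\infty$ at $0$) and an increasing $q$ with $q(e)=0$ generating $U$ on $\cint{e,1}$ (so $q\lran\infty$ at $1$), each unique up to a positive multiplicative constant. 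For $a,b\in\opint{0,e}$ their $g$-images lie in $\opint{e,1}$, where $U$ is generated by $q$, so applying $q$ to the anti-automorphism relation gives $q(g(U(a,b)))=q(g(a))+q(g(b))$. Hence $\psi:=q\circ g$ is a \emph{second} additive generator of the strict t-norm $U|_{\cint{0,e}^2}$, and by uniqueness $\psi=\kappa\,p$ for some $\kappa>0$. I expect this ratio-pinning to be the conceptual heart of the proof: the two generators $p$ and $q$ are a priori scaled independently, yet associativity forces them to be proportional through $g$, so rescaling $p$ (the one available degree of freedom) to make $\kappa=1$ yields $q\circ g=p$ on $\opint{0,e}$. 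Setting $u:=-p$ on $\cint{0,e}$ and $u:=q$ on $\cint{e,1}$ produces a continuous strictly increasing $u$ with $u(0)=-\infty$, $u(e)=0$, $u(1)=\infty$ and $u\circ g=-u$.

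It remains to verify $U(x,y)=u^{-1}(u(x)+u(y))$ on the mixed squares, which I would handle by the involution trick. Fix $x\in\opint{0,e}$, $y\in\opint{e,1}$ and put $w=U(x,y)$. If $w\geq e$, then $U(g(x),w)=U(U(g(x),x),y)=U(e,y)=y$, and since $g(x),w\in\cint{e,1}$ this is computed by $q$, giving $u(w)-u(x)=u(y)$, i.e.\ $u(w)=u(x)+u(y)$; if $w\leq e$ the symmetric computation with $g(y)\in\opint{0,e}$ and $U(g(y),w)=x\in\cint{0,e}$ gives the same conclusion, and the borderline $w=e$ is exactly the level curve $y=g(x)$, where $u(x)+u(y)=0=u(e)$. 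Thus the representation holds throughout $\ouint^2$, and it extends to the boundary by the continuity of $U$ away from $(0,1),(1,0)$ together with the monotone limits $u(0)=-\infty$, $u(1)=\infty$. The main obstacle is therefore twofold: conceptually, the proportionality $\psi=\kappa p$ that lets a single constant reconcile the two sides; and technically, the strictness and boundary identities of the first step, which are precisely where the assumption of continuity at $(e,1)$ and $(1,e)$ is indispensable.
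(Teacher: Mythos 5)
Your proposal is correct, but there is nothing in the paper to compare it against: the paper states Proposition \ref{prouni} as a quoted result from the literature (Ruiz and Torrens, and the author's manuscript on multi-polar t-conorms) and gives no proof at all, so any blind proof is necessarily ``a different route.'' Your argument is a sound, self-contained derivation along the classical lines: boundary identities $U(x,1)=1$, $U(x,0)=0$ for $x\in\ouint$ via associativity and continuity of the sections on the edges; strictness of $T_U$ and $C_U$; the unique $e$-inverse $g$ with $U(x,g(x))=e$; the anti-automorphism identity $g(U(a,b))=U(g(a),g(b))$ from uniqueness of inverses; and the pinning of the relative scaling of the two generators by the uniqueness (up to a positive constant) of additive generators of strict t-norms, with the mixed squares handled by the involution trick. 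Two points deserve tightening. First, your step 1 is only gestured at: the phrase that nilpotent or idempotent behaviour ``would force a discontinuity on the boundary of a mixed square'' is not the actual mechanism; the clean argument is that for $x\in\ouint$ the section $u_x$ is continuous with $u_x(0)=0$, $u_x(1)=1$, so by the intermediate value theorem every $x\in\ouint$ has an $e$-inverse, and an idempotent $a\in\opint{0,e}$ then yields $e=U(U(a,a),y)=U(a,e)=a$, while a zero divisor $U(x_0,y_0)=0$ yields $y_0=U(0,z)=0$ (using $U(x_0,1)=1\neq e$ to exclude $z=1$) --- both contradictions; establishing $U(x,1)=1$ itself needs continuity along the whole top edge except $(0,1)$ (a fixed-point argument for $\varphi(x)=U(x,1)$, whose range consists of fixed points lying in $\{1\}\cup\cint{0,e}$), not just at $(e,1)$. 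Second, at the two excluded corners the formula $u^{-1}(u(x)+u(y))$ involves $-\infty+\infty$ and requires the usual convention matching $U(0,1)\in\{0,1\}$; your proof delivers the representation on $\uint^2\setminus\{(0,1),(1,0)\}$, which is all the statement can mean there. With these small repairs the proof is complete.
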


Thus a uninorm $U$ is representable if and only if it is continuous on $\uint^2\setminus \{(0,1),(1,0)\},$ which
 completely characterizes the set of representable uninorms.

\begin{definition}
We will denote the set of all uninorms $U$ such that $T_U$ and $C_U$ are continuous by $\mathcal{U},$ and the set of all uninorms $V$ such that
$V(x,0)=0$ for all $x\in \lint{0,1}$  and $V(x,1)=1$ for all $x\in \rint{0,1}$  by $\mathcal{N}.$ Further, we will denote by $\mathcal{N}_{\max}$ ($\mathcal{N}_{\min}$) the set of all uninorms $U\in \mathcal{N}$ such that there exists a uninorm $U_1\in \mathcal{U}_{\max}$ ($U_1\in \mathcal{U}_{\min}$) such that
$U=U_1$ on $\opint{0,1}^2.$
\end{definition}

Note that the class of representable uninorms belongs to the intersection $\mathcal{U} \cap \mathcal{N}.$

In the case of t-norms (t-conorms), each continuous t-norm (t-conorm) is an ordinal sum of continuous generated t-norms (t-conorms).
The aim of this paper is the characterization of the uninorms that are ordinal sums of proper representable uninorms. In the following section we will investigate properties of ordinal sums of proper representable uninorms and in Section \ref{sec3} we will completely characterize uninorms which are ordinal sums of proper representable uninorms. We give our conclusions in Section \ref{sec4}.

\section{Ordinal sum of representable uninorms}
\label{sec2}

An ordinal sum of uninorms was introduced in \cite{genuni}.
For any $0\leq c\leq a<b\leq d\leq 1,$  $v\in \cint{a,b}$ and a uninorm $U$  with the neutral element $e\in \uint$ we will use a  transformation $f\colon \uint \lran \lint{c,a} \cup \{v\} \cup \rint{b,d}$
 given by
$$f(x)=\begin{cases} g_1(x) &\text{if $x\in \lint{0,e} ,$ } \\
v  &\text{if $x=e ,$ } \\
g_2(x) &\text{otherwise,} \end{cases}$$ where $g_1\colon \cint{0,e} \lran \cint{c,a}$ and $g_2\colon \cint{e,1} \lran \cint{b,d}$   are  linear increasing functions. Then $f$ is a piece-wise linear isomorphism of $\uint$ to $(\lint{c,a} \cup \{v\} \cup \rint{b,d})$  and a binary function
$U^{c,a,b,d}_v\colon (\lint{c,a} \cup \{v\} \cup \rint{b,d})^2 \lran (\lint{c,a} \cup \{v\} \cup \rint{b,d})$ given by
\begin{equation}U^{c,a,b,d}_v(x,y)=f(U(f^{-1}(x),f^{-1}(y)))\label{unitr}\end{equation} is a uninorm on $(\lint{c,a} \cup \{v\} \cup \rint{b,d})^2.$ Note that in the case
when $a=c$ ($b=d$) we will transform only the part of the uninorm $U$ which is defined on $\cint{0,e}^2$ ($\cint{e,1}^2$).
The function $f$ is piece-wise linear, however, more generally we can use any increasing isomorphic transformation.

\begin{proposition}
Assume $e\in \uint.$
Let $K$ be an index set which is finite or countably infinite and let $(\opint{a_k,b_k})_{k\in K}$   be an  disjoint system of open subintervals (which can be also empty) of $\cint{0,e},$ such that $\bigcup_{k\in K}\cint{a_k,b_k}=\cint{0,e}.$ Similarly, let
 $(\opint{c_k,d_k})_{k\in K}$ be a disjoint system of open subintervals (which can be also empty) of $\cint{e,1},$ such that $\bigcup_{k\in K}\cint{c_k,d_k}=\cint{e,1}.$ Let further these two systems be anti-comonotone, i.e., $b_k\leq a_i$ if and only if  $c_k \geq d_i$
  for all $i,k\in K,$
 and let for all $k\in K$ at least one of
 $\opint{a_k,b_k}$ and $\opint{c_k,d_k}$ be non-empty.  Thus we have for all $k\in K$ that
 $\cint{b_k,c_k}^2\subsetneq \cint{a_k,d_k}^2.$
  Assume
a family of   uninorms  $(U_k)_{k\in K}$ on $\uint^2$ such that if both  $\opint{a_k,b_k}$ and $\opint{c_k,d_k}$ are non-empty then $U_k$ is a proper uninorm, if $\opint{a_k,b_k}$ is non-empty $U_k$ is either a t-norm or a proper uninorm and  if $\opint{c_k,d_k}$ is non-empty then $U_k$ is either a t-conorm or a proper uninorm. Further, let $B$ be the set of all accumulation points of the set $\{b_i\}_{i\in K}$ and $C$ be the set of all accumulation points of the set $\{c_i\}_{i\in K}.$ Assume a function $n\colon B\lran B\cup C$ such that
$n(b_k)\in \{b_k,c_k\}$ for all $b_k\in B.$
Let the ordinal sum $U^e=(\langle a_k,b_k,c_k,d_k,U_k \rangle \mid k\in K)$ be given by
$$U^e(x,y)=\begin{cases}
y &\text{if  $x=e,$  }\\
x &\text{if  $y=e,$  }\\
(U_k)^{a_k,b_k,c_k,d_k}_{v_k} &\text{if  $(x,y)\in (\lint{a_k,b_k}\cup \rint{c_k,d_k})^2,$ }\\
x &\text{if $y\in \cint{b_k,c_k}, x\in \cint{a_k,d_k}\setminus\cint{b_k,c_k},$} \\
y &\text{if $x\in \cint{b_k,c_k}, y\in \cint{a_k,d_k}\setminus\cint{b_k,c_k},$} \\
\min(x,y) &\text{if $(x,y)\in\cint{b_k,c_k}^2\setminus (\opint{b_k,c_k}^2\cup \{(b_k,c_k),(c_k,b_k)\}),$}\\
&\text{ where $b_k\in B,$ $x+y<c_k+b_k,$}\\
\max(x,y) &\text{if $(x,y)\in\cint{b_k,c_k}^2\setminus (\opint{b_k,c_k}^2\cup \{(b_k,c_k),(c_k,b_k)\}),$}\\
&\text{ where $b_k\in B,$ $x+y>c_k+b_k,$}\\
n(b_k)  &\text{if $(x,y)=(b_k,c_k)$ or  $(x,y)=(c_k,b_k),$}
\end{cases}$$ where $v_k=c_k$ ($v_k=b_k$) if there exists an $i\in K$ such that $b_k=a_i$ and $U_i$ is disjunctive (conjunctive)
and $v_k=n(b_k)$ if $b_k\in B,$ and
$(U_k)^{a_k,b_k,c_k,d_k}_{v_{k}}$ is given by the formula \eqref{unitr}.
Then $U^e$ is a uninorm. \label{proorduni}
\end{proposition}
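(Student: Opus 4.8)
The plan is to verify the four defining properties of a uninorm for $U^e$---commutativity, neutral element $e$, monotonicity, and associativity---of which only the last two require real work. Commutativity is read off the definition: every clause is symmetric in its two arguments. The first two clauses exchange, the two projection clauses exchange, the summand $(U_k)^{a_k,b_k,c_k,d_k}_{v_k}$ is commutative because it is the transform by the isomorphism $f$ of the commutative $U_k$, the $\min$ and $\max$ clauses are symmetric, and the points $(b_k,c_k)$ and $(c_k,b_k)$ both return $n(b_k)$. That $e$ is neutral is exactly the content of the first two clauses.

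For monotonicity I would first record that the transform maps $(\lint{a_k,b_k}\cup\rint{c_k,d_k})^2$ into $\lint{a_k,b_k}\cup\{v_k\}\cup\rint{c_k,d_k}$ (since $f$ is an increasing isomorphism onto this set and $U_k$ is a uninorm), so an active region can only output a value of its own block, possibly the central image $v_k$. Monotonicity then splits into three checks: inside each active region it holds because $(U_k)^{a_k,b_k,c_k,d_k}_{v_k}$ is a uninorm and $f$ is increasing; inside each projection region it is trivial since the output is one coordinate; and across the interfaces between regions it must be verified by hand. The interface checks are exactly where the hypotheses enter: the bound $\min(x,y)\leq U_k(x,y)\leq\max(x,y)$ on the mixed quadrants together with anti-comonotonicity keep the summand values sandwiched between the neighbouring projection values, while the prescription sending the neutral point of $U_k$ to $v_k\in\{b_k,c_k,n(b_k)\}$ is what makes the summand match the adjacent projection clause without a jump; finally the split into $\min$ and $\max$ across the anti-diagonal $x+y=b_k+c_k$ glues the boundary of the central square $\cint{b_k,c_k}^2$ monotonically onto the strictly inner summands.

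The substantial part is associativity, $U^e(U^e(x,y),z)=U^e(x,U^e(y,z))$; when one argument is $e$ this is immediate, so assume none is. I would organise the cases by the positions of $x,y,z$ in the nested family of blocks, nesting being forced by the anti-comonotone condition, which linearly orders the boxes $\cint{a_k,d_k}$ by inclusion and in fact places each strictly inner box inside the central interval $\cint{b_k,c_k}$ of an outer block. The key preliminary lemma is closure: each $\cint{a_k,d_k}$, and each central interval $\cint{b_k,c_k}$, is closed under $U^e$, and $U^e$ restricted to $\cint{b_k,c_k}$ acts again as the ordinal sum of the strictly inner summands. This lets me treat $\cint{b_k,c_k}$ as a ``generalised neutral'' zone: whenever one argument (or a partial result) lands in $\cint{b_k,c_k}$ while the other lies in $\cint{a_k,d_k}\setminus\cint{b_k,c_k}$, the projection clauses collapse $U^e$ to the latter argument, and closure forces both brackets to collapse to the same element. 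When all three arguments share one active region the identity comes from the associativity of $U_k$ carried through $f$; the only twist is that the intermediate value may be the central image $v_k$, in which case the formula leaves the active region and the projection clauses take over, but consistency is restored precisely because $f(e)=v_k$ makes projection mimic the neutral element of $U_k$. For arguments spread over several blocks I would reduce to the innermost block containing at least two of them and reapply the projection/closure argument to the outer ones.

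The main obstacle will be the configurations sitting on the accumulation points: when $b_k\in B$ and an argument equals $b_k$ or $c_k$, the value comes not from any single summand but from $n$, from the value $n(b_k)$ at the corners, and from the $\min$/$\max$ rule on the boundary of $\cint{b_k,c_k}^2$. Here associativity cannot be inherited from a $U_k$ and must be checked directly, showing that the three coupled prescriptions---$v_k=n(b_k)$ for the neutral image, $n(b_k)$ at $(b_k,c_k)$ and $(c_k,b_k)$, and the selection of $\min$ or $\max$ according to the sign of $x+y-(b_k+c_k)$---are mutually consistent, so that both sides of the associativity equation land on the same point of $\{b_k,c_k\}$. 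Confirming this consistency, and that monotonicity survives the limit in which infinitely many summands accumulate at $b_k$, is the heart of the proof; away from $B\cup C$ the identity reduces to the summand uninorms and the projection structure.
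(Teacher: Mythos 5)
A preliminary remark: this paper does not prove the proposition at all --- the construction is imported wholesale from \cite{genuni} (``An ordinal sum of uninorms was introduced in \cite{genuni}''), which is cited as under review --- so there is no in-paper argument to measure your attempt against, and I can only judge the proposal on its own merits.

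Your structural reading of the construction is the right one, and none of your key claims is false. Anti-comonotonicity does order the blocks linearly by inclusion, with $\cint{a_i,d_i}\subseteq \cint{b_k,c_k}$ whenever $b_k\leq a_i$; each central interval $\cint{b_k,c_k}$ is closed under $U^e$ and its elements act as neutral elements for the active set $\lint{a_k,b_k}\cup\rint{c_k,d_k}$ through the projection clauses; and the whole construction is glued by the fact that $v_k$ --- whether it equals $b_k$, $c_k$ or $n(b_k)$ --- is exactly the annihilator of $U^e$ restricted to $\cint{b_k,c_k}$, which is what closes the associativity cases in which an intermediate product lands on the image of a summand's neutral element. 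Even the accumulation-point consistency you single out does hold: if $n(b_k)=c_k$ and $z\in\opint{b_k,c_k}$, then $z+c_k>b_k+c_k$, so the max-clause gives $U^e(c_k,z)=c_k$, i.e., $n(b_k)$ absorbs the central zone, and symmetrically for $n(b_k)=b_k$ via the min-clause.

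The gap is one of execution rather than of ideas: beyond commutativity and neutrality, essentially every claim that carries logical weight is announced rather than discharged. Monotonicity at the interfaces ``must be verified by hand'' but is not; associativity is organised into cases but no case is actually closed; and for the configurations involving $B$ you say yourself that confirming consistency ``is the heart of the proof'' --- and stop there. A complete argument has to run the three-variable case analysis (arguments spread over one, two or three distinct blocks; degenerate summands with $a_k=b_k$ or $c_k=d_k$; arguments equal to the idempotent endpoints $b_k$, $c_k$; products hitting $v_k$), and in each case reduce both bracketings to the same one of: a single summand value, a projection, or the annihilator $v_k$ resp.\ $n(b_k)$. As written, this is a correct and well-oriented plan for that proof, not the proof itself.
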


The above ordinal sum is an ordinal sum in the sense of Clifford \cite{cli},
i.e., such where the summand semigroups are not overlapping,
if the predecessor of each summand
which is a nilpotent uninorm is a uninorm with the neutral
element $e_k$ such that $U(x,y)\neq e_k$ holds for all $(x,y)\neq (e_k,e_k)$ (see \cite{genuni}).
Note that here nilpotent uninorm means such uninorm $U$ that $U(x,y)=u$ for some
$x,y\neq u,$ where $u$ is the annihilator of $U,$ and if for two summands in the ordinal sum defined on $\lint{a_{k_1},b_{k_1}}\cup \rint{c_{k_1},d_{k_1}}$ and on $\lint{a_{k_2},b_{k_2}}\cup \rint{c_{k_2},d_{k_2}},$ respectively, we have  $b_{k_1}=a_{k_2}$ (i.e., also
$c_{k_1}=d_{k_2}$) then the first is called a predecessor summand of the second.

\begin{remark}
It is evident that ordinal sum of uninorms $U^e=(\langle a_k,b_k,c_k,d_k,U_k \rangle \mid k\in K)$ is on $\cint{0,e}^2$ equal to an ordinal sum of t-norms, i.e.,
$T_U=(\langle a_k,b_k,T_{U_k} \rangle \mid k\in K)$ and on $\cint{e,1}^2$ to an ordinal sum of t-conorms $C_U=(\langle c_k,d_k,C_{U_k} \rangle \mid k\in K).$
In the case that we assume an ordinal sum of uninorms such that $\bigcup_{k\in K}\cint{a_k,b_k}\neq\cint{0,e}$
($\bigcup_{k\in K}\cint{c_k,d_k}\neq\cint{e,1}$) this can be given by the above ordinal sum, where the missing summands are covered by internal uninorms. Later we will see that this  holds also vice-versa, i.e., if $U$ is an ordinal sum of uninorms and  $T_U=(\langle a_k,b_k,T_k \rangle \mid k\in K)$ and $C_U=(\langle c_k,d_k,C_k \rangle \mid k\in K)$ and  $\lint{a,b}\cup \rint{c,d}$ is a missing summand support, i.e., such that is not covered by
    $\bigcup_{k\in K}\cint{a_k,b_k}\cup\bigcup_{k\in K}\cint{c_k,d_k},$ then $U(x,y)=\min(x,y)$ on $\cint{a,b}^2$ and $U(x,y)=\max(x,y)$ on $\cint{c,d}^2.$ Moreover, similarly as in Lemma \ref{lemmm} we get that $U$ is internal on
   $(\cint{a,b}\cup \cint{c,d})^2.$
\end{remark}

\begin{example}
Assume $U_1 \in \mathcal{U}_{\min}$ and $U_2\in \mathcal{U}_{\max}$ then $U_1$ and $U_2$ are ordinal sums of uninorms,
$U_1=(\langle e,e,e,1,C_U\rangle,\langle 0,e,1,1,T_U\rangle)$  and $U_2=(\langle 0,e,e,e,T_U\rangle,\langle 0,0,e,1,C_U\rangle).$
\end{example}

Assume a uninorm $U\colon \uint^2\lran \uint $ such that
$U=(\langle a_k,b_k,c_k,d_k,U_k \rangle \mid k\in K), $ where all conditions from Proposition \ref{proorduni} are satisfied and
both $\opint{a_k,b_k}$ and $\opint{c_k,d_k}$ are non-empty for all $k\in K.$ We will call such an ordinal sum \emph{complete}.
 Let each $U_k$ for
$k\in K$ be a representable uninorm. Then by Proposition \ref{prouni} the uninorm $U_k$ is continuous on $\uint\setminus \{(0,1),(1,0)\}.$
Since the summand corresponding to $U_k$ acts on $\lint{a_k,b_k}\cup \rint{c_k,d_k}$ uninorm $U$ is continuous  on $(\lint{a_k,b_k}\cup \rint{c_k,d_k})^2$ except the set $\{(f_k(x),f_k(y))\mid U_k(x,y)=e_k\}\cup\{(a_k,d_k),(d_k,a_k)\},$ where $e_k$ is the neutral element of $U_k$ and $f_k$ is the  transformation given by \eqref{unitr} respective to the summand corresponding to $U_k.$ Here let us note that for a representable uninorm $U_k$ there exists a strictly decreasing function  $r_k\colon \opint{0,1} \lran \opint{0,1}$ with $r_k(e_k)=e_k$ such that
$U_k(x,y)=e_k$ if and only if $r_k(x)=y.$

 The ordinal sum construction further implies that for $x\in \{a_k,b_k,c_k,d_k\}$
and $y\in \uint$ we have $U(x,y)\in \{x,y\}.$ Moreover, $U(a_k,y)=a_k$ and $U(d_k,y)=d_k$  for $y\in \opint{a_k,d_k}$ and $U(b_k,y)=b_k$ and $U(c_k,y)=c_k$  for $y\in \opint{b_k,c_k}.$ Since also $U_k(z,e)=\max(z,e)$ for $z>e$  and $U_k(z,e)=\min(z,e)$ for $z<e$ we see that $U$ is continuous on $\{b_k\} \times \cint{a_k,b_k},$  $  \cint{a_k,b_k}\times\{b_k\}, $ $\{c_k\}\times \cint{c_k,d_k},$  $  \cint{c_k,d_k}\times\{c_k\}, $
$\{b_k\} \times \cint{c_k,d_k} \setminus\{(b_k,c_k)\},$ $ \cint{c_k,d_k} \times \{b_k\} \setminus\{(c_k,b_k)\}$ and on
$\{c_k\} \times \cint{a_k,b_k} \setminus\{(c_k,b_k)\},$ $ \cint{a_k,b_k} \times \{c_k\} \setminus\{(b_k,c_k)\}.$ If we summarize this  over all summands for $k\in K$ we obtain the following result.

\begin{proposition}
Assume a uninorm $U\colon \uint^2\lran \uint.$  If $U$ is a complete ordinal sum of   representable uninorms, i.e., $U=(\langle a_k,b_k,c_k,d_k,U_k \rangle \mid k\in K), $ for some suitable systems  $(\opint{a_k,b_k})_{k\in K}$  and  $(\opint{c_k,d_k})_{k\in K}$  and a family of (proper) representable uninorms $(U_k)_{k\in K}$ then there exists a continuous strictly decreasing function $r\colon \uint \lran \uint$ with $r(0)=1,$ $r(e)=e$ and
$r(1)=0$ such that  $U$ is continuous on $\uint\setminus \{(x,r(x))\mid x\in \uint\}.$
Note that $U$ need not be non-continuous on the whole   set $\{(x,r(x))\mid x\in \uint\}.$
\end{proposition}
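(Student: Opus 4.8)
The plan is to confine all discontinuities of $U$ to the graph of a single continuous strictly decreasing involution $r\colon\uint\lran\uint$, which I build by stitching together the transformed neutral curves of the individual summands. First I would dispose of the two diagonal blocks. By the Remark, $U$ restricted to $\cint{0,e}^2$ coincides with the ordinal sum of the t-norms $T_{U_k}$ and restricted to $\cint{e,1}^2$ with the ordinal sum of the t-conorms $C_{U_k}$. Since each $U_k$ is representable, Proposition \ref{prouni} makes it continuous off $\{(0,1),(1,0)\}$, so every $T_{U_k}$ and $C_{U_k}$ is continuous; because completeness forces $\bigcup_{k}\cint{a_k,b_k}=\cint{0,e}$ and $\bigcup_{k}\cint{c_k,d_k}=\cint{e,1}$ with no missing summands, and an ordinal sum of continuous t-norms (t-conorms) is continuous, $U$ is continuous on $\cint{0,e}^2$ and on $\cint{e,1}^2$. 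Hence every discontinuity lies in the off-diagonal blocks $\cint{0,e}\times\cint{e,1}$ and $\cint{e,1}\times\cint{0,e}$, and by commutativity it suffices to locate them in one of them.

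Next I would define $r$. For each $k$ let $r_k\colon\ouint\lran\ouint$ be the strictly decreasing involution with $U_k(x,y)=e_k$ iff $y=r_k(x)$ recalled before the statement, and let $g_{1,k}\colon\cint{0,e_k}\lran\cint{a_k,b_k}$, $g_{2,k}\colon\cint{e_k,1}\lran\cint{c_k,d_k}$ be the increasing linear maps of the transformation \eqref{unitr}. For $x\in\opint{a_k,b_k}$ I set $r(x)=g_{2,k}(r_k(g_{1,k}^{-1}(x)))$, which is exactly the location, in the row through $x$, of the transformed neutral curve $\{(f_k(\xi),f_k(r_k(\xi)))\}$; it is continuous, strictly decreasing, with one-sided values $r(a_k^{+})=d_k$ and $r(b_k^{-})=c_k$. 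On $\cint{e,1}$ I define $r$ symmetrically by $x\mapsto g_{1,k}(r_k(g_{2,k}^{-1}(x)))$; since each $r_k$ is an involution, the resulting $r$ is an involution whose graph is symmetric about the diagonal. I fix the remaining values $r(0)=1$, $r(1)=0$, $r(e)=e$ and $r(b_k)=c_k$, $r(c_k)=b_k$ at the endpoints.

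The heart of the argument is to verify that these pieces glue into a globally continuous, strictly decreasing function, and this is the step I expect to be the main obstacle. The ordinal sum forces the two interval systems to be nested about $(e,e)$: by the anti-comonotonicity hypothesis, consecutive t-norm supports $\cint{a_k,b_k}$, $\cint{a_j,b_j}$ with $b_k=a_j$ correspond to consecutive t-conorm supports with $d_j=c_k$. Thus the left limit $r(b_k^{-})=c_k$ equals the right limit $r(a_j^{+})=d_j$, so $r$ is continuous across every shared endpoint and keeps decreasing as $x$ passes from $\opint{a_k,b_k}$ into $\opint{a_j,b_j}$. I would then treat the accumulation points: from the covering conditions, whenever $b_k\to e$ the matched endpoints satisfy $c_k\to e$, which both forces $r(e)=e$ in the limit and gives continuity at $e$; a general accumulation point $b_k\in B$ is handled the same way, using that $v_k=n(b_k)\in\{b_k,c_k\}$ lies between the one-sided limits so that no jump is introduced. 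Everything else in the construction is local; it is precisely this verification of continuity and strict monotonicity at the (possibly densely many) shared endpoints and accumulation points that carries the weight.

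Finally I would assemble the continuity claim. On each summand region $(\lint{a_k,b_k}\cup\rint{c_k,d_k})^2$ the discussion preceding the statement shows $U$ is continuous except on the transformed neutral curve together with $\{(a_k,d_k),(d_k,a_k)\}$, and both sets lie on the graph of $r$: the curve equals $\{(x,r(x))\}$ over $\opint{a_k,b_k}\cup\opint{c_k,d_k}$, while $r(a_k)=d_k$ and (by the involution) $r(d_k)=a_k$. On the connecting regions $U$ is $\min$, $\max$, or internal by the Remark, hence continuous, and the only exceptional points there, the corners $(b_k,c_k)$ and $(c_k,b_k)$ carrying the value $n(b_k)$, again lie on the graph since $r(b_k)=c_k$. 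Combining these with the continuity already secured on the diagonal blocks and on the boundary segments $\{b_k\}\times\cint{a_k,b_k}$, $\{c_k\}\times\cint{c_k,d_k}$ and their reflections, I conclude that every discontinuity of $U$ lies in $\{(x,r(x))\mid x\in\uint\}$, which is exactly the assertion.
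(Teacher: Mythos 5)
Your proposal is correct and takes essentially the same route as the paper, whose own proof is precisely the discussion preceding the statement: continuity of each summand off its transformed neutral curve (via the characterization of representable uninorms as those continuous except at $(0,1)$ and $(1,0)$, together with the functions $r_k$ satisfying $U_k(x,y)=e_k \Leftrightarrow y=r_k(x)$), continuity of $U$ on the connecting regions and boundary segments, and then a summary over all summands. Your explicit piecewise definition of $r$ from the transformed $r_k$ and your verification of continuity and strict monotonicity at shared endpoints and accumulation points simply make rigorous the gluing step that the paper leaves implicit in the phrase ``if we summarize this over all summands.''
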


\begin{example}
Let $U_1,U_2\colon \uint^2\lran \uint$ with $U_1=U_2$ be representable uninorms generated by
$$f=\begin{cases} \ln (2x) &\text{if $x\leq 0$} \\
-\ln(2-2x) &\text{otherwise,} \end{cases}$$ with $U_1(1,0)=1.$ Then the ordinal sum
$U^e=(\langle 0,\frac{1}{4},\frac{3}{4},1,U_1 \rangle,\langle \frac{1}{4},\frac{1}{2},\frac{1}{2},\frac{3}{4},U_2 \rangle),$ with $e=\frac{1}{2}$
is given in the following table

\

\noindent\begin{tabular}{|l||l|l|l|l|}
  \hline
  $x\backslash y$ & $\lint{0,\frac{1}{4}}$ & $\cint{\frac{1}{4},\frac{1}{2}}$ & $\rint{\frac{1}{2},\frac{3}{4}}$ & $\rint{\frac{3}{4},1}$ \\\hline \hline
  \multirow{2}{*}{$\lint{0,\frac{1}{4}}$} & $\frac{x}{4-4y}$  {\scriptsize if $x+y<1$}  &  \multirow{2}{*}{$\max(x,y)$} &   \multirow{2}{*}{$\max(x,y)$} &   \multirow{2}{*}{1-4(1-x)(1-y)} \\
   & $\frac{4x-1+y}{4x}$  {\scriptsize if $x+y>1$ } &  &  &  \\\hline
   \multirow{2}{*}{ $\cint{\frac{1}{4},\frac{1}{2}}$}  &  \multirow{2}{*}{$\min(x,y)$} & $\frac{x-y+\frac{1}{2}}{3-4y}$ {\scriptsize if $x+y<1$} &  \multirow{2}{*}{$\frac{3}{4}-4(\frac{3}{4}-x)(\frac{3}{4}-y)$} & \multirow{2}{*}{$\max(x,y)$} \\
   &  & $\frac{3x+y-\frac{3}{2}}{4x-1}$ {\scriptsize if $x+y>1$} &  &  \\\hline
  \multirow{2}{*}{ $\rint{\frac{1}{2},\frac{3}{4}}$} &  \multirow{2}{*}{$\min(x,y)$} & $4(x-\frac{1}{4})(y-\frac{1}{4})+\frac{1}{4}$ & $ \frac{y-x+\frac{1}{2}}{3-4x}$ {\scriptsize if $x+y<1$} &  \multirow{2}{*}{$\max(x,y)$} \\
   &  &  & $\frac{3y+x-\frac{3}{2}}{4y-1}$ {\scriptsize if $x+y>1$} &  \\\hline
  \multirow{2}{*}{ $\rint{\frac{3}{4},1}$} &  \multirow{2}{*}{$4xy$} &  \multirow{2}{*}{$\min(x,y)$} &  \multirow{2}{*}{$\min(x,y)$} & $\frac{y}{4-4x}$ {\scriptsize if $x+y<1$} \\
&  &  &  & $\frac{4y-1+x}{4y}$ {\scriptsize if $x+y>1$} \\
  \hline
\end{tabular}

\noindent where if $x+y=1$ then $U(x,y)=\frac{1}{2}$ for $(x,y)\in \opint{\frac{1}{4},\frac{3}{4}}$ and otherwise $U(x,y)=\frac{3}{4}.$
Thus here $U^e$ is non-continuous only in the points from the set $\{(x,1-x)\mid x\in \cint{0,\frac{1}{4}}\cup \cint{\frac{3}{4},1} \}.$ Moreover, evidently $U^e \in \mathcal{U} \cap \mathcal{N}.$

\end{example}

Further we will show some general properties of uninorms that we will use later.

\begin{proposition}
Assume a uninorm $U\colon \uint^2\lran \uint$ such that $U\in \mathcal{U}$ and $U\notin \mathcal{N}.$ Then $U$ is an ordinal sum of a uninorm and a non-proper uninorm
(i.e., a t-norm or a t-conorm). \label{pron}
\end{proposition}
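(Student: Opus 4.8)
The plan is to peel off the non-proper summand at a suitable idempotent point of the underlying t-norm, after reducing to a single corner by duality. Passing to the dual uninorm $\widetilde{U}(x,y)=1-U(1-x,1-y)$, which again lies in $\mathcal{U}$ (its underlying functions are dual to $C_U$ and $T_U$, hence continuous) and has neutral element $1-e$, interchanges the two conditions defining $\mathcal{N}$; moreover a representation of $\widetilde{U}$ as an ordinal sum of a uninorm and a t-norm dualizes to a representation of $U$ with a t-conorm summand. Hence I may assume the failure of $U\in\mathcal{N}$ occurs on the last column, i.e.\ $U(x_0,1)<1$ for some $x_0\in\ouint$. Since $U(x,1)=1$ for $x\geq e$, necessarily $x_0\in\opint{0,e}$; and if $U$ were disjunctive then $U(x,1)\geq U(0,1)=U(1,0)=1$ for every $x$, so $U$ must be conjunctive.

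Next I would study the column map $g(x)=U(x,1)$. Associativity together with $U(1,1)=1$ gives $U(g(x),1)=U(x,U(1,1))=g(x)$, so $g(g(x))=g(x)$: every value of $g$ is a fixed point of $g$. Because $U(z,1)=1$ for $z>e$ and $U(e,1)=1\neq e$, any fixed point $z<1$ lies in $\opint{0,e}$; applied to $x_0$ this yields a fixed point $g(x_0)\in\opint{0,e}$, so the set $P=\{x\in\cint{0,e}\mid U(x,1)=x\}$ meets $\ouint$. I intend to split $U$ at an idempotent point $m$ with $\lint{0,m}\subseteq P$. The basic tool is the identity: if $w\leq e$ is idempotent ($U(w,w)=w$) and $U(w,1)=w$, then for every $x\leq w$,
\[
U(x,1)=U(U(x,w),1)=U(x,U(w,1))=U(x,w)=x,
\]
using $U(x,w)=\min(x,w)=x$ (idempotency of $w$) twice; hence $\cint{0,w}\subseteq P$. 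Taking $m=\sup\{w\mid w \text{ idempotent},\,U(w,1)=w\}$, the idempotents of the continuous $T_U$ form a closed set, so $m$ is idempotent, and the identity yields $\lint{0,m}\subseteq P$.

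Granting $m>0$, the decomposition is then routine. Since $m$ is idempotent, $\cint{0,m}$ and $\cint{m,1}$ are closed under $U$; for $x<m$ and $y\in\cint{m,1}$ one has $x=U(x,m)\leq U(x,y)\leq U(x,1)=x$, so $U=\min$ on the cross region $\lint{0,m}\times\cint{m,1}$. Consequently $U|_{\cint{0,m}^2}$ is a uninorm with neutral element $m$, i.e.\ (a scaling of) a t-norm $T'$, while $U|_{\cint{m,1}^2}$ is a uninorm $V$ with neutral element $e$, and the cross terms equal $\min$. This is exactly $U=(\langle 0,m,1,1,T'\rangle,\langle m,e,e,1,V\rangle)$ in the sense of Proposition~\ref{proorduni}: an ordinal sum of the uninorm $V$ (proper if $m<e$, a t-conorm if $m=e$) and the non-proper summand $T'$.

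The main obstacle is establishing $m>0$, equivalently that the min-region reaches a positive idempotent without leaving holes in $P$, and this is where continuity of $T_U$ is indispensable. Two facts would drive it: every idempotent $w$ lying below some fixed point is itself fixed (for $w<z\in P$ one gets $w\leq U(w,1)\leq z\leq e$, whence $U(w,U(w,1))=\min(w,U(w,1))=w=U(w,1)$), and within an Archimedean component of $T_U$ the continuity forces min-behaviour at the top of the fixed region (if $x<m\leq x'$ with $g(x')>x'$ and $U(x,x')<x$, then $x=U(x,g(x'))=U(x,x')<x$, a contradiction, so $U(x,x')=x$, which pins the boundary of the component to an idempotent fixed point). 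Combining these with the fixed point in $\ouint$ produced above should force a positive idempotent fixed point, giving $m>0$. I expect this "no holes up to a positive idempotent" step to be the genuinely delicate part; it fails for uninorms outside $\mathcal{U}$ and must be extracted from the continuous ordinal-sum (Archimedean) structure of $T_U$.
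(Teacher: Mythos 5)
Your skeleton---the duality reduction, the observation that $g(x)=U(x,1)$ satisfies $g\circ g=g$, splitting at an idempotent point, and the closing decomposition---is essentially the paper's proof (the paper treats the disjunctive case directly, working with the column $u_0(y)=U(0,y)$; your setup is its dual). But the gap you flag is genuine, and the route you propose for closing it cannot work. You set $m=\sup\{w\le e\mid w\ \text{idempotent},\ U(w,1)=w\}$ and want $m>0$, i.e.\ a \emph{positive idempotent fixed point} of $g$. Such a point need not exist even when the proposition holds: take $U=U_{\min}\in \mathcal{U}_{\min}$ built from the product t-norm and any continuous Archimedean t-conorm. Then $U\in\mathcal{U}$, $U$ is conjunctive, $U\notin\mathcal{N}$ (indeed $U(x,1)=x<1$ for all $x\in\opint{0,e}$), and every point of $\lint{0,e}$ is a fixed point of $g$; but the only idempotents in $\cint{0,e}$ are $0$ and $e$, and $U(e,1)=1\neq e$, so your $m$ equals $0$. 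The correct split point here is $e=\sup P$: the supremum of \emph{all} fixed points is idempotent but need not itself be fixed, so ``force a positive idempotent fixed point'' is the wrong target. Your two auxiliary facts cannot repair this: the first only transfers fixedness to idempotents \emph{below} a known fixed point (and there are none in $\opint{0,e}$ in the example), and the second presupposes both $g(x)=x$ for $x<m$ and $g(x')=1$ for $x'>m$, which is part of what must be proved.

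The missing ingredient is the paper's propagation step, which propagates fixedness downward from an \emph{arbitrary} fixed point rather than an idempotent one: if $U(z,1)=z$ with $z\le e$, then for any $x\le z$ the continuity of $T_U$ gives, by the intermediate value theorem, some $u\in\cint{0,e}$ with $U(z,u)=x$, and associativity yields
\[
U(x,1)=U\bigl(U(z,u),1\bigr)=U\bigl(u,U(z,1)\bigr)=U(u,z)=x.
\]
Hence the set $P$ is downward closed, so $m:=\sup P\ge g(x_0)>0$ with no further work. For $x>m$ one then gets $U(x,1)=1$, since $U(x,1)\ge U(x,e)=x>m$ while any value of $g$ below $1$ is a fixed point lying in $\cint{0,e}$, hence in $P$, hence at most $m$. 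Finally $m$ is idempotent: otherwise $m<e$ and continuity of $T_U$ gives $x_1\in\opint{m,e}$ with $v:=U(x_1,x_1)<m$, whence $v\in P$ and $v=U(v,1)=U(x_1,x_1,1)=U(x_1,1)=1$, a contradiction. With this lemma replacing your identity, your final decomposition paragraph goes through verbatim. (Two smaller slips: fixed points of $g$ below $1$ lie in $\lint{0,e}$, not $\opint{0,e}$, since $0$ is fixed for conjunctive $U$; and to place $g(x_0)$ in $\opint{0,e}$ you also need $g(x_0)\ge U(x_0,e)=x_0>0$, which you never state.)
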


\begin{proof}
Assume $U(1,0)=1,$ the case when $U(1,0)=0$ can be shown analogically. Then $U(x,1)=1$ for all $x\in \uint$ and
$U(x,0)=0$ for all $x\in \cint{0,e}.$ If  $U\notin \mathcal{N}$ then there exists a $y\in \opint{e,1}$ such that $U(0,y)=z>0,$ i.e., $z\in \rint{0,y}.$
If $z\leq e$ we get $0=U(0,e)\geq U(0,z)=U(0,0,y)=z$ what is a contradiction, i.e., $z\in \rint{e,y}.$ Moreover,
$U(0,z)=z.$ Since $C_U$ is continuous then for all $x\in \cint{z,1}$ there exists a $u\in \cint{e,1}$ such that $U(z,u)=x.$
Then $$U(0,x)=U(0,z,u)=U(z,u)=x.$$ Thus if $U(0,y)>0$ for some $y\in \opint{e,1}$ then $U(0,y)=y.$ Let $b=\inf\{y\in \cint{e,1}\mid U(0,y)>0\}.$
Then $b$ is an idempotent point of $U,$ since otherwise the continuity of $C_U$ implies existence of $x_1\in \opint{e,b}$ such that
$U(x_1,x_1)=v>b$ which means $$b<v=U(0,v)=U(0,x_1,x_1)=U(0,x_1)=0$$ what is a contradiction.
 Further, if $U(0,y)=y$ for some $y\in \cint{e,1}$ then $U(x,y)=U(x,0,y)=U(0,y)=y$ for all $x\in \cint{0,e}$ and thus
 $U(x,y)=\max(x,y)$ if $(x,y)\in \cint{0,e}\times \rint{b,1} \cup \rint{b,1}\times \cint{0,e}.$

 Since $b$ is an idempotent point. $C_U$ (its transformation onto $\cint{e,1}^2$) is an ordinal sum $C_U=(\langle e,b,C_1 \rangle,\langle b,1,C_2 \rangle)$ and thus $U$ on $\cint{b,1}^2$ corresponds to $C_2$ and $U(x,y)=\max(x,y)$
for all $(x,y)\in \cint{0,b}\times \rint{b,1} \cup \rint{b,1}\times \cint{0,b}.$ Also, $\cint{0,b}^2$ is closed under $U.$ Thus $$U=(\langle 0,e,e,b,U^*\rangle,\langle 0,0,b,1,C_2\rangle),$$ where $U^*$ is uninorm which is a linear transformation of $U$ on $\cint{0,b}^2.$

\end{proof}

\begin{definition}
Let $p$ be a relation on $X\times Y$ and denote $p(x)=\{y\in Y \mid (x,y)\in p\}.$ Then $p$ will be called a \emph{continuous non-increasing pseudo-function} if
\begin{mylist}
\item for all $x_1,x_2\in X,$ $x_1<x_2$ there is $p(x_1)\geq p(x_2),$ i.e, for all $y_1\in p(x_1)$ and all $y_2\in p(x_2)$ we have
$y_1\geq y_2$ and thus $\mathrm{Card}(p(x_1)\cap p(x_2))\leq 1,$
\item for all $x\in X$ and all $y\in Y$ there exist $y_1\in Y$ and $x_1\in X$ such that $(x,y_1)\in p$ and $(x_1,y)\in p,$
\item if $y_1,y_2\in p(x) $ for some $x\in X$ then $y\in p(x)$ for all $y\in \cint{y_1,y_2}.$
\end{mylist}
A relation $p$ is called symmetric if $(x,y)\in p$ if and only if $(y,x)\in p.$
\end{definition}

\begin{remark}
\begin{mylist}
\item
In the case that $U$ is an ordinal sum of   representable uninorms which is not complete, then the function that determine the non-continuity points need not be strictly decreasing, just non-increasing. If there is a non-proper summand such that $a_k=b_k=0$ ($c_k=d_k=1$) then for $r_k$ we have
$r_k(0)=c_k$ ($r_k(1)=b_k$). Further if there  is a non-proper summand, i.e., $a_k=b_k>0$ ($c_k=d_k<1$) for some  $k\in K$ then
$U$ is non-continuous on $\{a_k\}\times \cint{c_k,d_k}$ ($\{c_k\}\times \cint{a_k,b_k}$). Thus in such a case $r_k$  is no longer a function since it contains also some vertical segments, however, $r_K$ is a symmetric continuous non-increasing pseudo-function.
\item If we assume and ordinal sum, where some summands are representable uninorms and some summands are internal uninorms then again we can obtain a symmetric continuous non-increasing pseudo-function $r$  such that $U$ is continuous on  $\uint\setminus \{(x,r(x))\mid x\in \uint\}.$ This follows from the fact that for an internal uninorm $V$ the monotonicity implies existence of a  symmetric continuous non-increasing pseudo-function
    $p_V$ such that $V(x,y)=\max(x,y)$ if $y>p_V(x)$ and $V(x,y)=\min(x,y)$ if $y<p_V(x).$
\end{mylist}
\end{remark}

In the following section we will try to characterize uninorms that are ordinal sums of proper representable uninorms.

\section{Characterization of uninorms that are equal to an ordinal sum of proper representable uninorms}
\label{sec3}

 For a given uninorm $U\colon \uint^2\lran \uint$ and  each $x\in \uint$ we define a function $u_x(z)=U(x,z)$ for $z\in \uint.$ We will start with the following useful result.

\begin{lemma}
Let $U\colon \uint^2\lran \uint$ be a uninorm, $U\in \mathcal{N}\cap \mathcal{U}.$ If $a\in \opint{0,1},$ $a\neq e$ is an idempotent point of $U$
then $u_a$ is non-continuous. \label{lenon}
\end{lemma}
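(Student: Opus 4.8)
The plan is to play idempotence off against associativity and the boundary behaviour forced by $U\in\mathcal{N}$, and then to contradict the assumption that $u_a$ is continuous. First I would record three elementary facts about $u_a$. Since $U$ is non-decreasing in its second argument, $u_a$ is non-decreasing. Since $U\in\mathcal{N}$ and $a\in\opint{0,1}$ lies in both $\lint{0,1}$ and $\rint{0,1}$, we get $u_a(0)=U(a,0)=0$ and $u_a(1)=U(a,1)=1$. Finally, as $e$ is the neutral element, $u_a(e)=U(a,e)=a$, and by hypothesis $a\neq e$.

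The key observation is that every value attained by $u_a$ is a fixed point of $u_a$. Indeed, take any $w\in\rn(u_a)$, say $w=U(a,z)$. Using associativity together with the idempotence $U(a,a)=a$,
$$u_a(w)=U(a,U(a,z))=U(U(a,a),z)=U(a,z)=w,$$
so $u_a(w)=w$ for all $w\in\rn(u_a)$.

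Now I would argue by contradiction. Suppose $u_a$ is continuous. Being continuous, non-decreasing, with $u_a(0)=0$ and $u_a(1)=1$, it is surjective onto $\uint$ by the intermediate value theorem, i.e. $\rn(u_a)=\uint$. Combined with the previous paragraph this forces $u_a(w)=w$ for every $w\in\uint$; in particular $u_a(e)=e$, contradicting $u_a(e)=a\neq e$. Hence $u_a$ cannot be continuous.

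The only step that needs a little care is the identity $\rn(u_a)=\uint$, which rests on the surjectivity of a continuous monotone map having the correct endpoint values; everything else is immediate from the uninorm axioms. I note that this argument invokes $U\in\mathcal{N}$ only for the endpoint values $u_a(0)=0$, $u_a(1)=1$, and does not seem to use the continuity of $T_U$ and $C_U$; before finalising I would double-check the later uses of this lemma to confirm that the plain non-continuity conclusion, rather than a sharper statement locating the discontinuity, is all that is required.
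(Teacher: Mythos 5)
Your proof is correct and follows essentially the same route as the paper: the paper shows $e\notin\rn(u_a)$ via the identical associativity--idempotence computation ($U(a,b)=e$ would give $e=U(a,a,b)=a$), and then non-continuity follows from monotonicity, the endpoint values $u_a(0)=0$, $u_a(1)=1$, and the intermediate value property. Your fixed-point packaging of that computation is a cosmetic variant, and your closing remark is also accurate---the lemma indeed needs $U\in\mathcal{N}$ only for the endpoint values and does not use the continuity of $T_U$ and $C_U$.
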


The above result follows from the fact that if $a\neq e,0,1$ is an idempotent element of $U$ then
$e\notin \mathrm{Ran}(u_a).$ Indeed, if $U(a,b)=e$ for some $b\in \uint$ then $e=U(a,a,b)=a$ what is a contradiction.

\begin{proposition}
Let $U\colon \uint^2\lran \uint$ be a uninorm, $U\in \mathcal{N}\cap \mathcal{U}.$ Then if $U(a,b)=e$ for some $a,b\in \uint,$ $a<e$ then
$U$ is continuous on $\uint^2\setminus (\lint{0,a}\cup\rint{b,1})^2.$ \label{procon}
\end{proposition}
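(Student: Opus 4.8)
The plan is to exploit associativity to convert the single equation $U(a,b)=e$ into a pair of mutually inverse homeomorphisms of $\uint$, and then to transport the continuity of $T_U$ and $C_U$ (available since $U\in\mathcal U$) across the two mixed blocks. First I would locate $(a,b)$. Since $a<e$, the point $(a,b)$ cannot lie in $\cint{0,e}^2$, where $U(a,b)\le\min(a,b)<e$; nor can $b=e$, since then $U(a,e)=a\neq e$. Hence $b>e$. Moreover $U\in\mathcal N$ excludes $a=0$ and $b=1$ (either would force $U(a,b)\in\{0,1\}\neq e$ by the boundary conditions defining $\mathcal N$), so $0<a<e<b<1$.

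The key structural step comes next. From $U(U(a,b),c)=U(e,c)=c$ together with associativity and commutativity I obtain $u_a(u_b(c))=u_b(u_a(c))=c$ for every $c\in\uint$, so $u_a$ and $u_b$ are mutually inverse, non-decreasing bijections of $\uint$; a non-decreasing self-bijection of $\uint$ is automatically a strictly increasing homeomorphism. Thus $u_a,u_b$ are continuous, with $u_a$ sending $0,e,b,1$ to $0,a,e,1$ and $u_b$ sending $0,a,e,1$ to $0,e,b,1$. Associativity also gives the transport identities $u_b(U(x,y))=U(u_b(x),y)$ and $u_a(U(x,y))=U(x,u_a(y))$, equivalently $U(x,y)=u_a(U(u_b(x),y))=u_b(U(x,u_a(y)))$, valid for all $(x,y)$.

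Now I would argue block by block. On $\cint{0,e}^2$ and $\cint{e,1}^2$ the map $U$ is a rescaling of the continuous $T_U$, resp.\ $C_U$, hence continuous. On the mixed block $\cint{0,e}\times\cint{e,1}$ I split along $x=a$ and $y=b$: for $x\ge a$ one has $u_b(x)\in\cint{e,b}$, so $(u_b(x),y)\in\cint{e,1}^2$ and $U(x,y)=u_a(U(u_b(x),y))$ is a composition of continuous maps; for $y\le b$ one has $u_a(y)\in\cint{a,e}$, so $(x,u_a(y))\in\cint{0,e}^2$ and $U(x,y)=u_b(U(x,u_a(y)))$ is continuous. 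The two resulting closed pieces $\cint{a,e}\times\cint{e,1}$ and $\cint{0,e}\times\cint{e,b}$ together exhaust the block minus the corner $\lint{0,a}\times\rint{b,1}$. The symmetric argument (via commutativity) handles $\cint{e,1}\times\cint{0,e}$ minus $\rint{b,1}\times\lint{0,a}$. Gluing these finitely many closed pieces, on each of which $U$ is continuous, yields continuity of $U$ outside the two corner rectangles, which both lie inside $(\lint{0,a}\cup\rint{b,1})^2$.

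The main obstacle is upgrading continuity of $U$ restricted to each closed piece to genuine two-variable continuity at the inner edges where the pieces abut the excluded corner, namely $\{a\}\times\cint{b,1}$ and $\cint{0,a}\times\{b\}$, where one must also control approach from inside the corner. I expect to resolve this with the transport identity and monotonicity: since $U$ is non-decreasing in each variable, continuity at $(a,y_0)$ reduces to matching the two diagonal corner limits, and applying $u_b$ to $U(a-\delta,y_0-\delta)$ gives $U(u_b(a-\delta),y_0-\delta)$ with $(u_b(a-\delta),y_0-\delta)\to(e,y_0)$ inside the already-handled piece $\cint{a,e}\times\cint{e,1}$. Because $u_b$ is a homeomorphism and $U(e,y_0)=y_0$, the lower corner limit equals $u_a(y_0)=U(a,y_0)$, and the upper corner limit does too; the same computation with the variables interchanged covers the edge $\cint{0,a}\times\{b\}$, completing the argument.
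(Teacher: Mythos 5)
Your proposal is correct, but it takes a genuinely different route from the paper's. The paper proves continuity of the two sections $u_a$ and $u_b$ by a range argument (if $c\notin\Ran(u_a)$ then $c=U(a,b,c)=u_a(U(b,c))$ is a contradiction, and monotone surjections of $\uint$ are continuous), then manufactures for every $x\in\opint{a,b}$ a partner $v_x$ with $U(x,v_x)=e$ using continuity of $T_U$, enlarges $\cint{a,b}$ to $\cint{g,h}$ with $g<a<b<h$ and $U(g,h)=e$ via Lemma \ref{lenon} (non-idempotency of $a$ and $b$), so that all vertical sections $u_x$ with $x\in\cint{g,h}$ are continuous, and finally concludes joint continuity from monotonicity by citing \cite{KD69}. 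You instead turn the single identity $U(a,b)=e$ into the observation that $u_a$ and $u_b$ are mutually inverse increasing homeomorphisms of $\uint$ and use the conjugation identities $U(x,y)=u_a(U(u_b(x),y))=u_b(U(x,u_a(y)))$ to transport the joint continuity of $C_U$ and $T_U$ onto the closed mixed rectangles $\cint{a,e}\times\cint{e,1}$ and $\cint{0,e}\times\cint{e,b}$, finishing by pasting and by the monotone diagonal-sandwich computation at the edges $\{a\}\times\cint{b,1}$ and $\cint{0,a}\times\{b\}$ (your checks go through: $u_b(a-\delta)\in\opint{a,e}$ for small $\delta$ since $u_b(a)=e>a$, so the translated points stay in an already-handled closed piece). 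Your version is more self-contained and in one respect more explicit: the theorem of \cite{KD69} needs separate continuity in \emph{both} variables, and at a point $(x_0,y_0)$ with $x_0\in\cint{a,b}$ but $y_0\notin\cint{g,h}$ the continuity of the horizontal section $x\mapsto U(x,y_0)$ at $x_0$ is not spelled out in the paper (only the sections $u_x$, $x\in\cint{g,h}$, are treated, and commutativity covers horizontal sections only for $y_0\in\cint{g,h}$); your conjugation identities yield exactly this missing separate continuity, and your final diagonal-limit argument settles full-neighborhood continuity at the edges abutting the excluded corner $\lint{0,a}\times\rint{b,1}$, where approach from inside the corner must be controlled — in fact you prove slightly more than stated, namely continuity off the two mixed corner rectangles only. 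What the paper's route buys in exchange is structural by-product information — the partner $v_x$ for every $x\in\opint{a,b}$ and the strict enlargement past $a$ and $b$ — which is the form in which Proposition \ref{procon} is exploited later (e.g., in Proposition \ref{sumstrp}).
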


\begin{proof} If  $U(a,b)=e$ then $a\notin \{0,1\}.$ Also if $U(a,b)=U(a,c)=e$ then
$b=U(b,a,c)=c,$ i.e., $b=c.$
 First we show that $u_a$ is continuous on $\uint.$ Since
$U$ is monotone and $u_a(0)=0,$ $u_a(1)=1$ the continuity of $u_a$ is equivalent with the equality $\mathrm{Ran}(u_a)=\uint.$
Assume that $\mathrm{Ran}(u_a)\neq\uint,$ i.e., there exists a $c\in \uint$ such that $c\notin \mathrm{Ran}(u_a).$ However, we have
$U(a,b,c)=c,$ i.e., for $z=U(b,c)$ we have $u_a(z)=c$ what is a contradiction.

 Thus $u_a$ and similarly $u_b$ are continuous functions.
Next we will show that for all $x\in \opint{a,b}$ there exists a $v_x\in \uint$ such that $U(x,v_x)=e.$
Assume $f\in \rint{a,e}$ (for $f\in \lint{e,b}$ the proof is analogous).
 Since $T_U$ is continuous and $U(a,f)\leq a,$ $U(f,e)=f$ there exists a $a_f\in \cint{0,e}$ such that
 $U(f,a_f)=a.$ Then $$e=U(a,b)=U(f,a_f,b) $$ and if $w_f=U(a_f,b)$ then $U(f,w_f)=e.$ Summarising we get that all $u_x$ for $x\in \cint{a,b}$
 are continuous. From the previous lemma we see that $a$ and $b$ are not idempotent elements, i.e., there exist $g,h$ with $g<a<b<h$ such that
 $U(g,h)=e,$ i.e., all $u_x$ for $x\in \cint{g,h}$
 are continuous. Now monotonicity of $U$ implies continuity on $\uint^2\setminus (\lint{0,a}\cup\rint{b,1})^2$ (see \cite{KD69}).
\end{proof}

From the previous proposition we see that if  $U\in \mathcal{N}\cap \mathcal{U}$ and $U$ is non-continuous in some point $(c,d)\in \uint^2$
then $u_x$ is non-continuous for all $x\in \cint{0,c}\cup \cint{d,1}.$

%

\begin{lemma}
Assume a uninorm $U\colon \uint^2\lran \uint,$ $U\in \mathcal{U}\cap \mathcal{N}.$   If $a\in \uint$ is an idempotent element of $U$ then
$U$ is internal on $\{a\}\times \uint.$ \label{lemint}
\end{lemma}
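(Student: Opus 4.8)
The plan is to reduce to the generic case $a\in\opint{0,1}$ with $a\neq e$ and then split the analysis of $U(a,y)$ according to whether $y$ lies in the t-norm region $\cint{0,e}$ or the t-conorm region $\cint{e,1}$. First I dispose of the boundary and neutral cases directly: if $a=e$ then $U(a,y)=y\in\{a,y\}$; if $a=0$ then $U\in\mathcal{N}$ gives $U(0,y)=0$ for $y<1$ and $U(0,1)\in\{0,1\}$, so $U(0,y)\in\{0,y\}$; the case $a=1$ is symmetric. Henceforth I assume $a\in\opint{0,1}$, $a\neq e$, and without loss of generality $a<e$ (the case $a>e$ is the exact dual, interchanging the roles of $0$ and $1$, of $\min$ and $\max$, and of $T_U$ and $C_U$).

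For $y\in\cint{0,e}$ the value $U(a,y)$ is controlled by the underlying t-norm. Since $U\in\mathcal{U}$, the t-norm $T_U$ is continuous and $a$ is one of its idempotent points; invoking the standard fact that an idempotent point of a continuous t-norm coincides there with $\min$ \cite{KMP00}, I obtain $U(a,y)=\min(a,y)\in\{a,y\}$.

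The substantive case is $y\in\cint{e,1}$. Writing $z:=U(a,y)$, the mixed-region bound $\min(a,y)\le U(a,y)\le\max(a,y)$ gives $a\le z\le y$, while associativity together with idempotency yields $U(a,z)=U(a,a,y)=U(a,y)=z$, so $z$ is a fixed point of $u_a$. It remains to exclude $a<z<y$. The key is to localize $z$ strictly above $e$: if $z\le e$ then, since $U$ restricted to $\cint{0,e}^2$ is bounded above by $\min$, the equality $U(a,z)=z$ forces $z\le\min(a,z)=a$, contradicting $a<z$ (and $z=e$ is impossible because $U(a,e)=a\neq e$). Hence $z>e$. Now I use continuity on the t-conorm side: since $U(z,e)=z\le y\le 1=U(z,1)$ (the last equality because $z>0$ and $U\in\mathcal{N}$) and $C_U$ is continuous, the intermediate value theorem supplies $w\in\cint{e,1}$ with $U(z,w)=y$. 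Associativity then gives the contradiction
$$z=U(a,y)=U(a,z,w)=U\bigl(U(a,z),w\bigr)=U(z,w)=y.$$
Thus $z\in\{a,y\}$, which finishes the t-conorm region and hence establishes internality of $U$ on $\{a\}\times\uint$.

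I expect the main obstacle to be exactly the localization step that pins $z$ strictly above $e$. This is what lets me switch from the t-norm region, where one cannot ``reach upward'' because $U\le\min$ there, to the t-conorm region, where the continuity of $C_U$ provides the surjectivity needed to hit $y$ and trigger the associativity contradiction. The hypotheses $U\in\mathcal{U}$ (continuity of $T_U$ and $C_U$) and $U\in\mathcal{N}$ (the boundary value $U(z,1)=1$) are both essential and enter precisely at these points.
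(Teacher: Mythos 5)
Your proof is correct and runs on the same engine as the paper's: associativity plus idempotency yields $U(a,z)=z$ for $z=U(a,y)$, the bound $U\leq\min$ on $\cint{0,e}^2$ localizes $z$ to $\{a\}\cup\opint{e,1}$, and continuity of $C_U$ combined with associativity collapses the remaining case. The only difference is organizational: the paper sets $b=\inf\{y\in\uint\mid U(a,y)>a\}$, proves the dichotomy ($\min(a,\cdot)$ below $b$, identity above $b$) and must then check the boundary value $U(a,b)$ separately, whereas your pointwise intermediate-value argument with target $y$ itself treats every $y$ uniformly and so dispenses with that boundary case (incidentally, $U(z,1)=1$ for $z>e$ already follows from the underlying t-conorm via $U(z,1)\geq U(e,1)=1$, so $\mathcal{N}$ is not strictly needed at that step, though your use of it is valid).
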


\begin{proof} If $a\in \{0,1,e\}$ the result is evident.  Otherwise we will assume $a<e$ (the proof for $a>e$ is analogous). Since $a$ is an idempotent point we have
$U(a,x)=\min(x,y)$ for all $x\in \cint{0,e}.$ From Lemma \ref{lenon} it follows that $u_a$ is non-continuous and
$e\notin \mathrm{Ran}(u_a). $
Assume $y>e$ and let $U(a,y)=v\leq y.$ Then if $v\leq e$ we have $v=U(a,a,y)=U(a,v)\leq a,$ i.e., $v=a.$
Thus if $v>a$ also $v>e.$ Denote $$b=\inf\{y\in \uint\mid U(a,y)>a\}.$$ Then
$U(a,y)>a$ for $y> b$ and $U(a,y)=\min(a,y)$ for $y<b.$ For $v=U(a,y)>a$ we further  have $v=U(a,a,y)=U(a,v).$ Since $U\in \mathcal{U}$
the continuity of $C_U$ ensures for any $y_2>v$ existence of $y_1$ such that $C(v,y_1)=y_2.$ Then $$U(a,y_2)=U(a,v,y_1)=U(v,y_1)=y_2.$$ Summarising,
for all $y>b$ we have $U(a,y)=y$ and for all $x<b$ we have     $U(x,a)=\min(a,x).$ To conclude the proof we have only to check the value
$U(a,b).$ Assume $U(a,b)=c\in \opint{a,b}.$ Then $c\geq e$ and $U(a,c)=c<b,$ what is a contradiction since $U(a,x)=\min(a,x)$ for all $x<b.$
\end{proof}

The above lemma shows, that if we denote the set of all idempotent point of $U$ by $I_U,$ then $U$ restricted to
$I_U^2$ is an internal uninorm.

\begin{lemma}
Each uninorm $U\colon \uint^2\lran \uint,$ $U\in \mathcal{U}$ is continuous in $(e,e).$
\end{lemma}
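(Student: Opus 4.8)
The plan is to verify continuity at $(e,e)$ directly from the $\varepsilon$--$\delta$ definition, after observing that $U(e,e)=e$ because $e$ is the neutral element. Given a point $(x,y)$ in a small square $\cint{e-\delta,e+\delta}^2$ about $(e,e)$, I would split according to the position of $x$ and $y$ relative to $e$ into four closed quadrants, and bound $|U(x,y)-e|$ separately on each.

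On the lower-left quadrant $\cint{0,e}^2$ the map $U$ is, as recalled in the introduction, a linear transformation of the underlying t-norm, namely $U(x,y)=e\cdot T_U(\frac{x}{e},\frac{y}{e}).$ Since $U\in\mathcal{U},$ the t-norm $T_U$ is continuous and $T_U(1,1)=1;$ hence as $(x,y)\lran(e,e)$ from inside this quadrant we have $T_U(\frac{x}{e},\frac{y}{e})\lran 1$ and so $U(x,y)\lran e.$ Symmetrically, on the upper-right quadrant $\cint{e,1}^2$ the map $U$ is a linear transformation of the continuous t-conorm $C_U$ with $C_U(0,0)=0,$ which yields $U(x,y)\lran e$ there as well. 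These two facts furnish radii $\delta_1,\delta_2>0$ controlling $|U(x,y)-e|<\varepsilon$ on the respective quadrants.

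On the two mixed quadrants $\cint{0,e}\times\cint{e,1}$ and $\cint{e,1}\times\cint{0,e}$ I would instead invoke the general uninorm estimate $\min(x,y)\le U(x,y)\le\max(x,y)$ stated in the introduction. If $|x-e|<\delta$ and $|y-e|<\delta,$ then both $\min(x,y)$ and $\max(x,y)$ lie in $\opint{e-\delta,e+\delta},$ so the sandwich forces $|U(x,y)-e|\le\delta;$ here the choice $\delta=\varepsilon$ already suffices, with no appeal to $\mathcal{U}.$ Taking $\delta=\min(\delta_1,\delta_2,\varepsilon)$ then handles all four quadrants simultaneously and gives continuity at $(e,e).$

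I do not anticipate a genuine obstacle. The only point requiring a word of care is bookkeeping: the hypothesis $U\in\mathcal{U}$ is used solely on the two diagonal quadrants, to guarantee continuity of $T_U$ and $C_U,$ whereas the mixed quadrants are controlled purely by the quadrant estimate $\min\le U\le\max$ valid for every uninorm. One should also note that the boundary segments $x=e$ and $y=e$ create no difficulty, since there $U(e,y)=y$ and $U(x,e)=x$ tend to $e$ as well, so the four closed quadrants cover a full neighbourhood of $(e,e)$ consistently.
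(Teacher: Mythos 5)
Your proposal is correct and follows essentially the same route as the paper: the diagonal quadrants are handled by continuity of $T_U$ and $C_U$, and the mixed quadrants by the sandwich bound between $\min$ and $\max$ (the paper phrases this with monotone sequences $a_n\leq U(a_n,b_n)\leq b_n$ rather than $\varepsilon$--$\delta$, but the idea is identical). No gap; your version is merely a more explicit write-up of the same argument.
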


\begin{proof}
If $T_U$ and $C_U$ are continuous, since $U$ is commutative, we have only to check that for two monotone  sequences $\{a_n\}_{n\in \mathbb{N}}$
and $\{b_n\}_{n\in \mathbb{N}}$ with $\lim\limits_{n\lran \infty} a_n =e =\lim\limits_{n\lran \infty} b_n$ and $a_n< e,$ $b_n> e$ for $n\in \mathbb{N}$ there is $\lim\limits_{n\lran \infty} U(a_n,b_n)=e.$ However, monotonicity gives us $a_n\leq U(a_n,b_n)\leq b_n$ and thus
$e\leq \lim\limits_{n\lran \infty} U(a_n,b_n)\leq e.$
\end{proof}

From now on we will investigate uninorms such that  there exists a continuous strictly decreasing function $r\colon \uint \lran \uint$ with $r(0)=1,$ $r(e)=e$ and
$r(1)=0$ such that  $U$ is continuous on $\uint\setminus \{(x,r(x))\mid x\in \uint\}.$
 Then  if $U$ is non-continuous only in $(0,1),(1,0)$ the uninorm $U$ is representable. Thus if $U$ is not representable then due to the Proposition \ref{procon} there exist $a,b\in \uint $ such that
$U$ is non-continuous in all points of $\{(x,r(x))\mid x\in \cint{0,a}\cup \cint{b,1}\},$ where $a>0$ and $b<1.$

Each continuous t-norm (t-conorm) is equal to an ordinal sum of continuous Archimedean  t-norms (t-conorms). Note that a continuous t-norm (t-conorm) is Archimedean if and only if it has only trivial idempotent points $0$ and $1.$ A continuous Archimedean t-norm $T$ (t-conorm $C$) is either strict, i.e.,
strictly increasing on $\rint{0,1}^2,$ (on $\lint{0,1}^2$) or nilpotent, i.e.,  there exist $(x,y)\in \opint{0,1}^2$ such that $T(x,y)=0$
($C(x,y)=1$).

For  Archimedean underlying functions we can show the following result.

\begin{proposition}
Assume a uninorm $U\colon \uint^2\lran \uint,$ $U\in \mathcal{U}\cap \mathcal{N}.$
If $T_U$ and $C_U$ are Archimedean then either $U$  is a representable uninorm or $U\in \mathcal{N}_{\min}\cup \mathcal{N}_{\max}$.  \label{sumstrp}
\end{proposition}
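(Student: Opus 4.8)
The plan is to exploit the crucial consequence of the Archimedean hypothesis, namely that the only idempotent points of $U$ are $0,e,1$: since $T_U$ and $C_U$ have only the trivial idempotents, and on the two cross rectangles $\min\le U\le\max$ leaves no room for a non-trivial one, we have $I_U=\{0,e,1\}$. Everything then turns on a single dichotomy: whether or not there exist $a\in\opint{0,e}$ and $b\in\opint{e,1}$ with $U(a,b)=e$ (a \emph{crossing} of the neutral level off the centre). First I would record, exactly as in the argument following Proposition \ref{procon}, that for $x\in\opint{0,e}$ the map $u_x$ is continuous if and only if $U(x,\cdot)=e$ is solvable: indeed $u_x(e)=x<e<1=u_x(1)$, so continuity forces a solution by the intermediate value property, while a solution forces continuity by Proposition \ref{procon}. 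Hence the set $S=\{x\in\opint{0,e}\mid u_x\text{ is continuous}\}$ is precisely the set of solvable $x$.

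Next I would show that $S$ is an open subinterval of $\opint{0,e}$ whose left endpoint is idempotent or equal to $0$. Openness and the ability to descend from any solvable point are what the proof of Proposition \ref{procon} supplies: a solution at $(a,b)$ yields, via Lemma \ref{lenon} applied to the necessarily non-idempotent $a,b$, a strictly larger symmetric pair $g<a<b<h$ with $U(g,h)=e$. At the infimum $a^{*}=\inf S$ the map $u_{a^{*}}$ is non-continuous (otherwise $S$ could be extended below $a^{*}$), and one shows $a^{*}$ is an idempotent point of $U$; since $\opint{0,e}$ carries no idempotent, either $a^{*}=0$ or $S=\emptyset$, and symmetrically on the top side the corresponding supremum is $1$ or the top solvable set is empty. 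This is the step I expect to be the \textbf{main obstacle}, because turning ``boundary of the continuity region'' into ``idempotent point'' requires a careful one-sided limit argument (passing to the limit in $U(x_n,r(x_n))=e$ across the square excluded by Proposition \ref{procon}, and invoking Lemma \ref{lemint} for internality there) that the Archimedean hypothesis must render inconsistent unless the boundary sits at $0,e$ or $1$.

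In the crossing case $S=\opint{0,e}$, so $U(x,\cdot)=e$ is solvable for every $x\in\opint{0,e}$ and, symmetrically, $U(\cdot,y)=e$ for every $y\in\opint{e,1}$; the resulting assignment $x\mapsto r(x)$ is a continuous strictly decreasing bijection of $\opint{0,e}$ onto $\opint{e,1}$ with $r(e)=e$. Letting $a\to 0$ (hence $r(a)\to 1$) in Proposition \ref{procon}, the excluded squares $(\lint{0,a}\cup\rint{r(a),1})^{2}$ shrink to the two corners, so $U$ is continuous on $\uint^{2}$ except possibly at $(0,1)$ and $(1,0)$. By Proposition \ref{prouni} this means $U$ is representable, which is the first alternative.

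In the no-crossing case $S=\emptyset$, for every $x\in\opint{0,e}$ the map $u_x$ jumps over $e$ without attaining it, so $U(x,y)\neq e$ on the whole open cross region. Here I would first observe that $T_U$ and $C_U$ cannot both be nilpotent: a zero divisor $x^{c}$ of $x$ gives $U(x^{c},U(x,y))=U(0,y)=0$, which is impossible unless $U(x,y)<e$, whereas the dual one-divisor argument forces $U(x,y)>e$, a contradiction. I would then prove that every interior cross value lies on the same side of $e$ and in fact equals $\min(x,y)$ throughout (whence $U\in\mathcal{N}_{\min}$) or $\max(x,y)$ throughout (whence $U\in\mathcal{N}_{\max}$). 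When $T_U$ is nilpotent this is quick: with generator $t$ and $x^{c}$ the $t$-complement of $x$ one has $U(x^{c},U(x,y))=0$, so $t(U(x,y))\ge t(x)$, i.e.\ $U(x,y)\le x$, which with $U\ge\min$ gives $U(x,y)=\min(x,y)$; the nilpotent-$C_U$ case is dual and yields $\max$. When both underlying functions are strict the value must still collapse uniformly to $\min$ or to $\max$ (the side being fixed by whether $U$ is conjunctive or disjunctive), and this is the second place where the Archimedean property is indispensable: assuming some interior cross value satisfies $x<U(x,y)<e$, one reduces the top argument through $U(\sqrt{y},\sqrt{y})=y$ and uses associativity together with the continuity of $T_U$ to extract an idempotent in $\opint{0,e}$, contradicting $I_U=\{0,e,1\}$. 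Once $U$ equals $\min$ (resp.\ $\max$) on the open cross region, its agreement with the uninorm $U_{\min}$ (resp.\ $U_{\max}$) built from the same $T_U,C_U$ on $\opint{0,1}^{2}$ is immediate, placing $U$ in $\mathcal{N}_{\min}$ (resp.\ $\mathcal{N}_{\max}$) and completing the dichotomy.
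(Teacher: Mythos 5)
Your overall skeleton is the same as the paper's --- the dichotomy on whether $U(x,y)=e$ is solvable off the centre, with Proposition \ref{procon} and Proposition \ref{prouni} closing the crossing case --- and your nilpotent-generator shortcut in the no-crossing case is correct (and tidier than the paper's treatment of that subcase). However, two steps you leave open are genuine gaps, not routine verifications. In the crossing case, the step you yourself flag as the main obstacle (that $a^{*}=\inf S$ is idempotent) is never proved, and the one-sided-limit route you sketch is not how it gets done. The paper's resolution is a squaring trick that makes the whole infimum discussion unnecessary: if $U(a,b)=e$ then $U(U(a,a),U(b,b))=U(a,a,b,b)=e$, so the $n$-fold $T_U$- and $C_U$-powers satisfy $U(a^{(n)},b^{(n)})=e$, and Archimedeanness gives $a^{(n)}\to 0,$ $b^{(n)}\to 1$; applying Proposition \ref{procon} along this sequence yields continuity of $U$ off $\{(0,1),(1,0)\}$ directly. (In your language: $a\in S$ implies $U(a,a)\in S$ with $U(a,a)<a$, so $\inf S=0$ at once.) Without this observation your crossing case is incomplete.

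The second gap is the both-strict, no-crossing case, which is where most of the paper's proof lives. You need (i) that no cross value is strictly between $x$ and $e$ (or between $e$ and $y$), and (ii) that internal values cannot mix, i.e.\ $U$ cannot equal $\min$ at one cross point and $\max$ at another. For (ii), your parenthetical reason --- ``the side being fixed by whether $U$ is conjunctive or disjunctive'' --- is not a mechanism: conjunctive/disjunctive is the corner value $U(0,1),$ which for $U\in\mathcal{N}$ neither determines nor is determined by the behaviour on $\opint{0,e}\times\opint{e,1}.$ The paper instead proves a propagation claim: a single equality $U(x,y)=x$ forces $U(x,z)=x$ for all $z\in\lint{e,1}$ (Archimedeanness of $C_U$), then $U(q,z)=q$ for all $q\leq x$ (Archimedeanness of $T_U$), and the threshold $b$ separating min-acting from non-min-acting points is shown to be idempotent, hence $b\in\{0,e\}$; this yields the trichotomy $U\in\mathcal{N}_{\min},$ $U\in\mathcal{N}_{\max},$ or all cross values strictly interior. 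Point (i) is then proved only with that trichotomy in hand: writing $c=U(x,y)$ with $x<c<e,$ picking $x_1$ with $U(c,x_1)=x$ and setting $z=U(x_1,y),$ the subcase $z>e$ is excluded precisely because $U(c,z)=c$ would be a non-interior cross value, and the subcase $z<e$ gives $U(c,z)=c,$ impossible for an Archimedean $T_U.$ Your $\sqrt{y}$ manipulation does not replace this: from $c=U(x,U(\sqrt{y},\sqrt{y}))$ one only concludes $U(x,\sqrt{y})<e,$ and no idempotent in $\opint{0,e}$ is produced. As written, the strict case does not close.
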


\begin{proof}
If for all $x\in \opint{0,1}$ there exists a $y\in \cint{0,1}$ such that $U(x,y)=e$ then since $T_U$ and $C_U$ are continuous and Archimedean  $U$ is continuous on $\cint{0,1}^2$ except points $(0,1)$ and $(1,0)$ and thus $U$ is a representable uninorm.
 If for any $x\in \opint{0,1},$ $x\neq e$ there exists a $y\in \cint{0,1}$ such that $U(x,y)=e$ then also $U(x,x,y,y)=e$ and since $T_U$ and $C_U$
 are continuous and Archimedean  Proposition \ref{procon} implies that for all $x\in \opint{0,1}$ there exists a $y\in \cint{0,1}$ such that $U(x,y)=e$ and thus $U$
 is representable.
 Therefore we will suppose that $U(x,y)=e$ for $(x,y)\in \cint{0,1}^2$ if and only if $x=y=e.$

If $U(x,y)=x=U(x,e)$ (or similarly if $U(x,y)=y$) for some $x,y\in \opint{0,1},$ $x<e$ and $y>e$ then
 $U(x,\underbrace{y,\ldots,y}_{n\text{-times}})=x$ for all $n\in \mathbb{N}$ and since $C_U$ is continuous and  Archimedean we have
 $U(x,z)=x$ for all $z\in \lint{e,1}.$ Further, since $T_U$ is  continuous and  Archimedean for all $0<q\leq x$ there exists
 $x_1\in \cint{0,e}$ such that $U(x,x_1)=q$ and thus $U(q,z)=U(x_1,x,z)=U(x_1,x)=q$ for all $z\in \lint{e,1}.$ Let $$b=\inf\{x\in\cint{0,e}\mid
 U(x,\frac{3}{4})>x  \}.$$ Then $U(x,y)=x$ for all $x<b$ and $y\in \lint{e,1}$ and $U(x,y)>x$ for all $x>b$ and $y\in \lint{e,1}.$
 If $b$ is not an idempotent point then since $T_U$ is continuous and  Archimedean there exists $b_1,$ $b<b_1<e$ such that
 $U(b_1,b_1)=v<b.$  Then for a $y\in \lint{e,1}$ and $U(b_1,y)=w>b_1$ we have $ U(b_1,w) = U(b_1,b_1,y)=U(v,y)=v=U(b_1,b_1)$  
 what is possible only if there is an idempotent point in $\cint{b_1,w}.$ However, then $$b>v=U(b_1,b_1)=b_1>b$$ what is a contradiction.  
 Thus $b$ is an idempotent point. Since $T_U$ and $C_U$ are Archimedean we have $b\in \{0,e\}.$ Thus we get that  either $U(x,y)=x$ for all $(x,y)\in \opint{0,e}\times \opint{e,1},$
 i.e., $U\in \mathcal{N}_{\min},$  or
 $U(x,y)=y$ for all $(x,y)\in \opint{0,e}\times \opint{e,1},$ i.e., $U\in \mathcal{N}_{\max},$  or there is $U(x,y)\in \opint{x,y}$ for all $(x,y)\in \opint{0,e}\times \opint{e,1}.$
 From now on we will suppose that $U(x,y)\in \opint{x,y}$ for all $(x,y)\in \opint{0,e}\times \opint{e,1}.$

 Take any $(x,y)\in \opint{0,e}\times \opint{e,1}$ and then $U(x,y)=c\in \opint{0,e}.$ Without loss of generality assume $c<e$
 (the case when $c>e$ is analogous). Then since $x<c<e$ and $T_U$ is continuous and Archimedean  there exists a $x_1 \in \opint{0,e}$ such that
 $U(c,x_1)=x.$ Then $$c=U(x,y)=U(c,x_1,y)$$ which is a contradiction if $U(x_1,y)\in \opint{e,1}.$ Since $U(x_1,y)\leq y$ and 
 $U(x_1,y)\neq e$   we have $x_1\leq U(x_1,y)=z<e.$ Then  since $z\in \opint{0,e}$ and $T_U$ is Archimedean the  equality $c=U(c,z)$ implies that $z$ is an idempotent point
  what is  a contradiction. Summarising, $U$  is either a representable uninorm or $U\in \mathcal{N}_{\min}\cup \mathcal{N}_{\max}$.
\end{proof}

\begin{corollary}
Assume a uninorm $U\colon \uint^2\lran \uint,$ $U\in \mathcal{U}\cap \mathcal{N}$ and let  there exists a continuous strictly decreasing function $r\colon \uint \lran \uint$ with $r(0)=1,$ $r(e)=e$ and
$r(1)=0$ such that  $U$ is continuous on $\uint\setminus \{(x,r(x))\mid x\in \uint\}.$
Then if there exist $a\in \cint{0,e}$ and $b\in \cint{e,1}$ such that $U$ is an Archimedean (i.e., nilpotent or strict) t-norm on $\cint{a,e}^2$ and $U$ is an Archimedean (i.e., nilpotent or strict) t-conorm on $\cint{e,b}^2$ then $U$ on $\cint{a,b}^2$ is a representable uninorm.  \label{sumstr}
\end{corollary}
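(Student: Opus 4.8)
The plan is to restrict $U$ to the square $\cint{a,b}^2$, rescale it to a uninorm $W$ on $\uint^2$, verify that $W$ meets the hypotheses of Proposition~\ref{sumstrp}, and then use the continuity of $U$ off the graph of $r$ to discard the two degenerate alternatives. First I would check that $\cint{a,b}^2$ is closed under $U$: the blocks $\cint{a,e}^2$ and $\cint{e,b}^2$ are mapped into $\cint{a,e}$ and $\cint{e,b}$ because $U$ is a t-norm, respectively a t-conorm, there, while on the mixed block $\cint{a,e}\times\cint{e,b}$ (and its transpose) the general uninorm bound $\min(x,y)\le U(x,y)\le\max(x,y)$ yields $a\le U(x,y)\le b$. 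Hence $U|_{\cint{a,b}^2}$ is a uninorm with neutral element $e$, and the increasing affine rescaling of $\cint{a,b}$ onto $\uint$ produces a uninorm $W$ whose underlying functions $T_W$ and $C_W$ are exactly the rescaled Archimedean summands. In particular $W\in\mathcal U$ and both $T_W$ and $C_W$ are Archimedean.

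Next I would record the local shape of $U$ at the endpoints. Since $a$ and $b$ are the annihilators of the two Archimedean summands they are idempotent points of $U$, so by Lemma~\ref{lemint} the maps $u_a$ and $u_b$ are internal; as in the proof of that lemma there are thresholds $t_a$ and $s_b$ with $U(a,y)=a$ for $y<t_a$, $U(a,y)=y$ for $y>t_a$, and dually $U(b,y)=b$ for $y>s_b$, $U(b,y)=y$ for $y<s_b$. By Lemma~\ref{lenon} the function $u_a$ is discontinuous, and since $U$ is continuous off the graph of $r$ its only jump must lie on that graph, whence $t_a=r(a)$; because $r$ is strictly decreasing with $r(e)=e$ and $a<e$ this gives $t_a=r(a)>e$, and symmetrically $s_b=r(b)<e$.

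The step I expect to be the main obstacle is proving $W\in\mathcal N$, that is, $t_a\ge b$ and $s_b\le a$. I would establish each by contradiction. If $t_a<b$ then $t_a\in\opint{e,b}$ and $U(a,b)=b$, so $W$ is disjunctive and fails $W(x,0)=0$; Proposition~\ref{pron} then exhibits $W$ as an ordinal sum of a uninorm and a t-conorm, forcing an idempotent point of $W$ strictly inside $\cint{e,1}$ — namely the rescaled $t_a$ — which is impossible since $C_W$ is Archimedean. Dually, if $s_b>a$ then $s_b\in\opint{a,e}$, $W$ is conjunctive and fails $W(x,1)=1$, and Proposition~\ref{pron} produces an idempotent strictly inside $\cint{0,e}$, contradicting that $T_W$ is Archimedean. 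Hence $t_a\ge b$ and $s_b\le a$, which are precisely $W(x,0)=0$ for $x<1$ and $W(x,1)=1$ for $x>0$, so $W\in\mathcal N$.

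Finally, $W\in\mathcal U\cap\mathcal N$ with $T_W,C_W$ Archimedean, so Proposition~\ref{sumstrp} gives that $W$ is representable or $W\in\mathcal N_{\min}\cup\mathcal N_{\max}$. To eliminate the latter I would invoke continuity once more: a uninorm in $\mathcal N_{\min}$ is discontinuous at every point $(e,y)$ with $y>e$, and one in $\mathcal N_{\max}$ at every $(e,y)$ with $y<e$; transported back to $\cint{a,b}^2$ these are points in the row $x=e$ other than $(e,e)$, which are off the graph of $r$ and where $U$ is therefore continuous. This rules out both classes, so $W$ is representable, and since representability is preserved by the increasing affine rescaling, $U$ restricted to $\cint{a,b}^2$ is a representable uninorm.
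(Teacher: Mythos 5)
Your proof is correct, and its skeleton is the paper's: restrict $U$ to $\cint{a,b}^2$ (closure exactly as you argue), rescale to a uninorm $W$ on $\uint^2$ with Archimedean underlying operations, apply Proposition~\ref{sumstrp}, and discard $\mathcal{N}_{\min}$ and $\mathcal{N}_{\max}$ using continuity off the graph of $r$. The genuine difference is your third paragraph. Proposition~\ref{sumstrp} requires its uninorm to lie in $\mathcal{U}\cap\mathcal{N}$, and for the rescaled $W$ the condition $W\in\mathcal{N}$ amounts to $U(x,a)=a$ for $x\in\lint{a,b}$ and $U(x,b)=b$ for $x\in\rint{a,b}$, which on the mixed strips $\opint{e,b}\times\{a\}$ and $\opint{a,e}\times\{b\}$ does not follow from the Archimedean hypotheses alone. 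The paper's proof applies the proposition to the rescaled uninorm $U^*$ without checking this at all, so your threshold argument ($t_a=r(a)>e$ via Lemmas~\ref{lemint} and~\ref{lenon}, then $t_a\ge b$, $s_b\le a$ by contradiction) fills a real gap rather than duplicating the paper; it is essentially the computation the paper only performs later, inside Proposition~\ref{propair}. Two caveats on your contradiction step. First, what you actually use is not the bare statement of Proposition~\ref{pron} but a fact from its proof, namely that the splitting point of the ordinal sum is $\inf\{y\mid W(0,y)>0\}$ (the rescaled $t_a$) and is idempotent; either cite that explicitly or argue directly: if $t_a\in\opint{e,b}$ were not idempotent, continuity of $C_U$ would give $x_1\in\opint{e,t_a}$ with $v=U(x_1,x_1)>t_a$, whence $v=U(a,v)=U(a,x_1,x_1)=U(a,x_1)=a$, a contradiction; and an idempotent $t_a\in\opint{e,b}$ contradicts Archimedeanity of the t-conorm on $\cint{e,b}^2$ directly, so Proposition~\ref{pron} can be bypassed altogether. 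Second, when $a=0$ or $b=1$, Lemma~\ref{lenon} does not apply, but there $U\in\mathcal{N}$ supplies the required boundary behaviour for free, so these cases are trivial and worth a sentence. Finally, your elimination of $\mathcal{N}_{\min}/\mathcal{N}_{\max}$ via discontinuity along the vertical segment $\{e\}\times\opint{e,b}$ is a clean variant of the paper's, which instead exhibits discontinuity along the horizontal segment corresponding to $\opint{0,e}\times\{1\}$; both contradict the strict monotonicity of $r$, and yours has the small advantage of using only interior points and no boundary values.
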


\begin{proof}
Since $U$ is a t-norm on $\cint{a,e}^2$ and a t-conorm on $\cint{e,b}^2$ then $a$ and $b$ are idempotent points of $U$ and $U$ is closed on $\cint{a,b}^2,$ i.e., $U$ on   $\cint{a,b}^2$ is isomorphic to a uninorm which we denote by $U^*.$ The previous proposition implies that
either $U^*$ is a  representable uninorm or $U^*\in \mathcal{N}_{\min}\cup \mathcal{N}_{\max}.$ However, if $U^*\in \mathcal{N}_{\min}$
 then $U^*$ is non-continuous in all points from the set $\opint{0,e}\times \{1\},$ i.e., $r$ is not strictly decreasing what is a contradiction. Similarly, if  $U^* \in \mathcal{N}_{\max}$ then $U^*$ is non-continuous in all points from the set $\opint{e,1}\times \{0\}.$
 Thus $U^*$ is representable.
\end{proof}

Before we introduce another result we recall  the claim of \cite[Theorem 5.1]{ordut}.
Here $\mathcal{U}(e)=\{U\colon \uint^2 \lran \uint \mid U \text{ is associative, non-decreasing, with the neutral element } e\in \uint\}.$ Thus $U\in \mathcal{U}(e)$ is a uninorm if it is commutative.

\begin{theorem}
Let $U\in \mathcal{U}(e)$ and $a,b,c,d \in \uint,$ $a\leq b\leq e\leq  c\leq d$ be such that $U\vert_{\cint{a,b}^2}$ is associative, non-decreasing, with the neutral element $b$ and
$U\vert_{\cint{c,d}^2}$ is associative, non-decreasing, with the neutral element $c.$ Then the set $(\cint{a,b}\cup \cint{c,d})^2$ is closed under $U.$ \label{th51}
\end{theorem}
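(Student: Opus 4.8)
The plan is to prove closure case by case according to where the arguments $x,y$ lie in $\cint{a,b}\cup\cint{c,d}$. When both arguments lie in $\cint{a,b}$ (respectively both in $\cint{c,d}$) closure is already part of the hypothesis, since the phrase ``$U\vert_{\cint{a,b}^2}$ is a binary operation with neutral element $b$'' presupposes that $U$ maps $\cint{a,b}^2$ into $\cint{a,b}$, and likewise for $\cint{c,d}$. Hence only the mixed products have to be treated, and because $\mathcal{U}(e)$ does not require commutativity I would handle $U(x,y)$ and $U(y,x)$ separately, for $x\in\cint{a,b}$ and $y\in\cint{c,d}$.

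For the mixed products I would first record two elementary facts valid for every non-decreasing associative $U$ with neutral element $e$, needing no commutativity: from $U(s,e)=U(e,s)=s$ and monotonicity one gets $U(s,t)\geq\max(s,t)$ whenever $s,t\geq e$, and $U(s,t)\leq\min(s,t)$ whenever $s,t\leq e$. Using $x\leq b\leq e\leq c\leq y$ together with $U(x,e)=x$ and $U(e,y)=y$, monotonicity also gives $x\leq U(x,y)\leq y$, so $U(x,y)\in\cint{a,d}$. It therefore suffices to show that $U(x,y)$ avoids the gap $\opint{b,c}$, because $\cint{a,d}\setminus\opint{b,c}=\cint{a,b}\cup\cint{c,d}$.

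The key step, which I expect to carry the whole argument, is a consequence of associativity combined with the two sub-neutral elements. Writing $z=U(x,y)$ and using that $b$ is neutral on $\cint{a,b}$ and $c$ is neutral on $\cint{c,d}$, I would compute $U(b,z)=U(U(b,x),y)=U(x,y)=z$ and $U(z,c)=U(x,U(y,c))=U(x,y)=z$. Now suppose $z\in\opint{b,c}$. If $z>e$ then $z,c\geq e$, so $U(z,c)\geq\max(z,c)=c>z$, contradicting $U(z,c)=z$; if $z<e$ then $z,b\leq e$, so $U(b,z)\leq\min(b,z)=b<z$, contradicting $U(b,z)=z$; and if $z=e$ then neutrality gives $U(b,z)=U(b,e)=b\neq e=z$, again a contradiction. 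Hence $z\notin\opint{b,c}$ and $U(x,y)\in\cint{a,b}\cup\cint{c,d}$.

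Finally I would run the mirror-image argument for $U(y,x)$: setting $z'=U(y,x)$ and using left-neutrality of $c$ and right-neutrality of $b$, associativity yields $U(c,z')=U(U(c,y),x)=z'$ and $U(z',b)=U(y,U(x,b))=z'$, after which the same trichotomy on the position of $z'$ relative to $e$ (now using $U(c,z')\geq c$ when $z'>e$ and $U(z',b)\leq b$ when $z'<e$) excludes $z'\in\opint{b,c}$. The only delicate point is bookkeeping the one-sided neutralities in the non-commutative setting: each contradiction must pair the \emph{left} factor $b$ with the below-$e$ case and the \emph{right} factor $c$ with the above-$e$ case (and symmetrically for $U(y,x)$), so that the $\max$/$\min$ bounds above and below $e$ apply to exactly the factor that associativity produces.
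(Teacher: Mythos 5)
Your proof is correct. One thing to note up front: the paper itself gives no proof of this statement --- it is explicitly recalled as the claim of Theorem~5.1 of Drygas \cite{ordut} --- so there is no internal argument to compare against, and your self-contained proof stands on its own. The structure is sound: the diagonal cases are indeed part of the hypothesis, and your interpretive remark there is not merely pedantic but necessary --- if one read ``$U\vert_{\cint{a,b}^2}$ has neutral element $b$'' only as $U(x,b)=U(b,x)=x$ on $\cint{a,b}$ without closure of the square, the theorem would be false (e.g.\ a t-norm on $\cint{0,e}$ given as an ordinal sum with a nilpotent summand on $\cint{a',b}$, $a'<a$, satisfies the weakened hypothesis yet sends points of $\cint{a,b}^2$ below $a$). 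For the mixed case, the bound $x\leq U(x,y)\leq y$ from the global neutral element $e$ correctly reduces everything to excluding $\opint{b,c}$, and the key identities $U(b,z)=z$ and $U(z,c)=z$ use only global associativity together with the one-sided neutralities $U(b,x)=x$ and $U(y,c)=y$, so they are valid. The trichotomy on $z$ versus $e$ then closes each branch: $U(s,t)\geq\max(s,t)$ for $s,t\geq e$ and $U(s,t)\leq\min(s,t)$ for $s,t\leq e$ follow from monotonicity and $U(s,e)=U(e,s)=s$ without commutativity, and in the subcase $z=e$ the equation $U(b,e)=b=z=e$ contradicts $b<z$, as you say. Your mirror argument for $U(y,x)$, pairing the left factor $c$ with the above-$e$ case and the right factor $b$ with the below-$e$ case, is exactly the bookkeeping the non-commutative setting requires; within this paper's actual use of the theorem the uninorms are commutative, so your version is if anything slightly more general than what is needed.
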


Now we can show the following.

\begin{proposition}
Assume a uninorm $U\colon \uint^2\lran \uint,$ $U\in \mathcal{U}\cap \mathcal{N}$ and let  there exists a continuous strictly decreasing function $r\colon \uint \lran \uint$ with $r(0)=1,$ $r(e)=e$ and
$r(1)=0$ such that  $U$ is continuous on $\uint\setminus \{(x,r(x))\mid x\in \uint\}.$
Then \begin{mylist} \item if $a,b \in \cint{0,e}$ are idempotent elements such that $U(x,x)<x$ for all $x\in \opint{a,b}$ then also $c=r(b)$ and $d=r(a)$ are idempotent elements and $U(y,y)>y$  for all $y\in \opint{c,d},$ 
\item if $c,d \in \cint{e,1}$ are idempotent elements such that $U(y,y)>y$  for all $y\in \opint{c,d}$ then also $b=r(c)$ and $a=r(d)$ are idempotent elements and $U(x,x)<x$ for all  $x\in \opint{a,b}.$ 
\end{mylist} \label{propair}
\end{proposition}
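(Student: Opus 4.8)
The plan is to prove part (i) directly and then obtain (ii) by the symmetric argument, exchanging the roles of $T_U$ and $C_U$, of $\min$ and $\max$, and of the two halves of $[0,1]$. Throughout I will use two structural facts about idempotent elements proved earlier: by Lemma \ref{lemint} the section $u_x = U(x,\cdot)$ is internal whenever $x$ is idempotent, and by Lemma \ref{lenon} (together with the standing hypothesis that all discontinuity points of $U$ lie on the graph of $r$) such a section $u_x$ has a single jump, located precisely at $y = r(x)$. I will also repeatedly use that, since $U$ is commutative, its set of discontinuity points is symmetric; hence $u_x$ is discontinuous at $y$ if and only if $u_y$ is discontinuous at $x$.

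First I would pin down the sections $u_a$ and $u_b$. Since $a,b\in\cint{0,e}$ are idempotent with $U(x,x)<x$ on $\opint{a,b}$ (so $a<b\leq e$), internality forces the precise form $U(b,y)=b$ for $e<y<r(b)$ and $U(b,y)=y$ for $y>r(b)$, and likewise for $a$ with threshold $r(a)$; the jumps sit at $r(a),r(b)$ because those are the only discontinuity points in the rows $a,b$. Putting $c=r(b)$ and $d=r(a)$ (so $e\leq c<d$, as $r$ is strictly decreasing), I would then show $c$ and $d$ are idempotent by the argument of Proposition \ref{pron}: if $c$ were not idempotent then $U(c,c)>c$, and continuity of $C_U$ would yield $c_1\in\opint{e,c}$ with $v:=U(c_1,c_1)>c$; then $U(b,v)=v$ (since $v>c$) while $U(b,c_1,c_1)=U(b,c_1)=b$ (since $c_1<c$), forcing $v=b$, a contradiction. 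The same computation with $a$ in place of $b$ gives that $d$ is idempotent. The degenerate endpoints $b=e$ and $a=0$ are immediate, since $r(e)=e$ and $r(0)=1$ are idempotent.

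It remains to prove $U(y,y)>y$ for all $y\in\opint{c,d}$, equivalently that $\opint{c,d}$ contains no idempotent point. Suppose $p\in\opint{c,d}$ is idempotent; then $p>e$, and by the argument symmetric to the one above $r(p)$ is idempotent and $u_p$ has its unique jump at $r(p)<e$. I would locate this threshold by probing with $a$ and $b$: the forms of $u_a,u_b$ give $U(p,a)=U(a,p)=a$ (as $a<p<d$) and $U(p,b)=U(b,p)=p$ (as $p>c$), and comparing with the internal form of $u_p$ forces $a\leq r(p)\leq b$. Since $r(p)$ is idempotent while $\opint{a,b}$ has no idempotent points, necessarily $r(p)\in\{a,b\}$. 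I then invoke the symmetry of the discontinuity set: if $r(p)=a$ then $u_p$ jumps at $a$, hence $u_a$ jumps at $p$, hence $p=r(a)=d$, contradicting $p<d$; if $r(p)=b$ then $u_b$ jumps at $p$, so $p=r(b)=c$, contradicting $p>c$. In either case we reach a contradiction, so no such $p$ exists.

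I expect the main obstacle to be this last step, ruling out an idempotent $p$ strictly inside $\opint{c,d}$, since it is the only place where one must combine three ingredients at once — the internal forms of $u_a$, $u_b$, $u_p$, the idempotency of the threshold $r(p)$, and the symmetry of the discontinuity set — rather than run a single associativity computation. Extra care is also needed at the endpoints $a=0$ and $b=e$, where one of the two probes degenerates and the exclusion $r(p)\in\{a,b\}$ must be re-derived using the membership $U\in\mathcal{N}$ and the injectivity of $r$.
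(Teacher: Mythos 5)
Your proof is correct, and while it rests on the same two pillars as the paper's (Lemma \ref{lemint} for internality of the sections $u_a,u_b$ and Lemma \ref{lenon} plus the standing hypothesis to locate their unique jumps at $r(a),r(b)$), it diverges in how idempotency of $d=r(a)$ is obtained. The paper proves this structurally: it takes the smallest idempotent $g\geq d$, invokes Theorem \ref{th51} to get closure of $\cint{a,g}^2$ under $U$, and then applies Proposition \ref{pron} to the resulting uninorm $U^*$, splitting into the cases $U^*\in\mathcal{N}$ and $U^*\notin\mathcal{N}$; only for $c=r(b)$ does it argue directly via the internal form of $u_b$ and a case analysis on $U(b,c)\in\{b,c\}$. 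You instead treat $c$ and $d$ by one and the same associativity computation: if, say, $c$ were not idempotent, continuity of $C_U$ yields $c_1\in\opint{e,c}$ with $v=U(c_1,c_1)>c$, and then $b=U(U(b,c_1),c_1)=U(b,U(c_1,c_1))=U(b,v)=v$, which is absurd since $v>c\geq e\geq b$. This avoids Theorem \ref{th51} and Proposition \ref{pron} entirely, makes the two halves of the argument symmetric, and is arguably cleaner and more self-contained. In the final step both proofs map a hypothetical idempotent $p\in\opint{c,d}$ to the idempotent $r(p)$, but the paper finishes in one line: since $r$ is strictly decreasing, $c<p<d$ gives $a=r(d)<r(p)<r(c)=b$ at once, contradicting the absence of idempotents in $\opint{a,b}$. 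Your probing argument, which only yields $a\leq r(p)\leq b$ and then excludes the endpoints through the symmetry of the discontinuity set, is valid but is unnecessary work; strict monotonicity and injectivity of $r$ give the strict inclusion immediately, and they also dispose of the degenerate cases $a=0$ and $b=e$ without any separate discussion of $U\in\mathcal{N}$.
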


\begin{proof} We will show only the first part, the second part is analogous.
Let  $a,b \in \cint{0,e}$ be idempotent elements of $U$ such that $U(x,x)<x$ for all $x\in \opint{a,b}$ and let $c=r(b)$ and $d=r(a).$ Let  $g$ be the smallest idempotent element of $U$ such that
$g\geq d.$ Then according to Theorem \ref{th51} interval $\cint{a,g}^2$ is closed under $U,$ i.e., it is a linear transformation of some uninorm $U^*,$ $U^*\in \mathcal{U}.$
 If     $U^*\in \mathcal{N}$ then $U$ is non-continuous in $(a,g)$ which means that $g=d.$ If  $U^*\notin \mathcal{N}$ then Proposition \ref{pron} implies that $U^*$ is an ordinal sum of uninorm and a non-proper uninorm and since $U$ is non-continuous
 in $(a,d)$ where $d\leq g$ we have $U(a,z)<e$ for $z<d$ and $U(a,z)>e$ for $z>d,$   
  i.e., $U^*$ is an ordinal sum of a uninorm and a t-conorm and $d$ is an idempotent point, i.e., $d=g.$ Thus
 in all cases $d$ is an idempotent element of $U.$ 
 
 Further, $u_b$ is non-continuous exactly in the one point $x=c $ and since $b$ is idempotent Lemma \ref{lemint} implies $U(b,x)=\min(x,b)$ for $x< c$ and $U(b,x)=x$ for $x>c,$    $U(b,c)\in \{b,c\}.$ If $U(b,c)=b$ then also $U(b,c,c)=b$ which implies $U(c,c)\leq c,$ i.e., $c$ is an idempotent point of $U.$ Assume $U(b,c)=c.$ Then for $x\in \opint{e,c}$ we have $$c=U(b,c)=U(b,x,c)=U(x,c)$$ which means that there is an idempotent point
 in $\cint{x,c}.$ Since $C_U$ is continuous, i.e., the set of all idempotent points is closed we see that $c$ is an idempotent point of $U.$
 Thus both $c$ and $d$ are idempotent points. 
 
 Assume that $h\in \opint{c,d}$ is an idempotent point. Then similarly as above we can show that $r(h)$ is also an idempotent point of $U$ and $a=r(d)<r(h)<r(c)=b,$ i.e., there is an idempotent point between $a$ and $b$ what is a contradiction.
\end{proof}

\begin{definition}
An internal uninorm $U\colon \uint^2\lran \uint$ will be called \emph{s-internal} if there exists a continuous and strictly decreasing function $v_{U}\colon \uint \lran \uint$ such that  $U(x,y)=\min(x,y)$ if $y<v_U(x)$ and  $U(x,y)=\max(x,y)$ if $y>v_U(x).$
\end{definition}

\begin{proposition}
Assume a uninorm $U\colon \uint^2\lran \uint,$ $U\in \mathcal{U}\cap \mathcal{N}$ and let  there exists a continuous strictly decreasing function $r\colon \uint \lran \uint$ with $r(0)=1,$ $r(e)=e$ and
$r(1)=0$ such that  $U$ is continuous on $\uint\setminus \{(x,r(x))\mid x\in \uint\}.$ Then $U$ is an ordinal sum of representable uninorms, i.e.,  $U=(\langle a_m,b_m,c_m,d_m,U_m \rangle \mid m\in M),$ where
 $(\opint{a_m,b_m})_{m\in M}$ and $(\opint{c_m,d_m})_{m\in M}$ are two
  anti-comonotone systems of disjoint non-empty open intervals such that
  $\bigcup_{m\in M}\cint{a_m,b_m}=\cint{0,e}$ and   $\bigcup_{m\in M}\cint{c_m,d_m}=\cint{e,1},$  and
  $(U_m)_{m\in m}$ is a family of   (proper) representable uninorms and s-internal uninorms on $\uint^2.$  \label{charpro}
\end{proposition}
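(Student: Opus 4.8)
The plan is to read the ordinal-sum decomposition off the underlying continuous t-norm $T_U$ and t-conorm $C_U$, and to use the non-continuity curve $r$ to match the summands on $\cint{0,e}$ with those on $\cint{e,1}$. First I would record the structural facts. Since $U\in\mathcal U$, both $T_U$ and $C_U$ are continuous, hence ordinal sums of continuous Archimedean t-norms, resp.\ t-conorms. Writing $S=\{x\in\opint{0,e}\mid U(x,x)<x\}$ and $S'=\{y\in\opint{e,1}\mid U(y,y)>y\}$, these are open sets whose connected components $\opint{a_k,b_k}$, resp.\ $\opint{c_j,d_j}$, are exactly the supports of the Archimedean summands, while on the complementary closed idempotent sets $I\cap\cint{0,e}$ and $I\cap\cint{e,1}$ the map $U$ is internal by Lemma \ref{lemint}. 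I would also observe that, $U$ being commutative, the non-continuity set $\{(x,r(x))\}$ is symmetric about the diagonal, so $r(r(x))=x$; thus $r$ is a decreasing involutive homeomorphism of $\cint{0,e}$ onto $\cint{e,1}$ fixing $e$.

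Next I would set up the pairing. Proposition \ref{propair} says that $r$ carries each component $\opint{a_k,b_k}$ of $S$ onto a component $\opint{r(b_k),r(a_k)}$ of $S'$ and conversely; combined with the involution property this gives $r(S)=S'$, whence $r(I\cap\cint{0,e})=I\cap\cint{e,1}$. Consequently the maximal open intervals of idempotent points on which $U=\min$ are carried bijectively onto the maximal open intervals on which $U=\max$. I would index all paired intervals by a common set $M$, putting $c_m=r(b_m)$ and $d_m=r(a_m)$. The equivalence $b_m\le a_i\iff c_m\ge d_i$ then follows from $r$ being decreasing, so the two systems are anti-comonotone, and since $r$ is a homeomorphism every interval that appears is non-degenerate.

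Then I would identify each summand. Fix a paired couple $(\cint{a_m,b_m},\cint{c_m,d_m})$. As $a_m,b_m,c_m,d_m$ are idempotent, $U|_{\cint{a_m,b_m}^2}$ is a t-norm with neutral element $b_m$ and $U|_{\cint{c_m,d_m}^2}$ is a t-conorm with neutral element $c_m$ (this holds uniformly, the $\min$/$\max$ cases included), so by Theorem \ref{th51} the set $(\cint{a_m,b_m}\cup\cint{c_m,d_m})^2$ is closed under $U$. Collapsing the gap $\cint{b_m,c_m}$ to a point yields an isomorphic uninorm $U_m\in\mathcal U$, which lies in $\mathcal N$ because $a_m$ and $d_m=r(a_m)$ act as annihilators on the relevant sides (idempotency of $a_m,d_m$ together with Lemma \ref{lemint}). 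If the couple comes from an Archimedean pair, then $T_{U_m}$ and $C_{U_m}$ are Archimedean and $U_m$ inherits the non-continuity-curve hypothesis from the restriction of $r$, so Corollary \ref{sumstr} (equivalently Proposition \ref{sumstrp}, the $\mathcal N_{\min}\cup\mathcal N_{\max}$ alternatives being excluded by the strict monotonicity of $r$) shows $U_m$ is a representable uninorm. If instead the couple comes from a $\min/\max$ pair, then $U_m=\min$ on its lower square and $U_m=\max$ on its upper square, while on the cross part Lemma \ref{lemint} dictates the value through the transition point $r(x)$; hence $U_m$ is s-internal with $v_{U_m}$ the rescaling of $r$.

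Finally I would assemble and verify. The intervals $\cint{a_m,b_m}$ are the closures of the components of $S$ together with the closures of the $\min$-regions; since every idempotent point is either interior to the idempotent set (so in a $\min$-region) or a boundary point (so a limit of components of $S$), these closures cover $\cint{0,e}$, and symmetrically $\bigcup_m\cint{c_m,d_m}=\cint{e,1}$. It then remains to check that the ordinal sum $(\langle a_m,b_m,c_m,d_m,U_m\rangle\mid m\in M)$ of Proposition \ref{proorduni} reproduces $U$: on each $(\lint{a_m,b_m}\cup\rint{c_m,d_m})^2$ this is the definition of $U_m$, on the cross-terms between distinct summands the value ($\min$, $\max$, $x$ or $y$) is forced by the idempotency of the endpoints and the position relative to the curve, and at accumulation points of the endpoint families the behaviour is governed exactly by the sets $B$, $C$ and the choice function $n$ read off from $r$. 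The hard part will be precisely this last global verification — coordinating the t-norm and t-conorm decompositions simultaneously and controlling the behaviour at accumulation points of idempotents so that the piecewise data glue back to $U$ on all of $\uint^2$; once the summands and their matching are in place, the representability or s-internality of each piece is immediate from Corollary \ref{sumstr} and Lemma \ref{lemint}.
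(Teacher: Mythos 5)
Your route is the paper's route almost verbatim --- decompose $\cint{0,e}\setminus I_U$ and $\cint{e,1}\setminus I_U$ into components, pair them through $r$ via Proposition \ref{propair}, close each couple under $U$ via Theorem \ref{th51}, collapse the gap, and finish with Proposition \ref{sumstrp}/Corollary \ref{sumstr} for the Archimedean couples and Lemma \ref{lemmm} for the idempotent intervals --- but you skip the one verification that makes the collapse legitimate. ``Collapsing the gap $\cint{b_m,c_m}$ to a point'' is well defined only if $\mathrm{Card}\bigl(\mathrm{Ran}(U\vert_{\lint{a_m,b_m}\cup\rint{c_m,d_m}})\cap\cint{b_m,c_m}\bigr)<2$: closure of $(\cint{a_m,b_m}\cup\cint{c_m,d_m})^2$ under $U$ still allows mixed products $U(x_1,y_1)$ with $x_1\in\lint{a_m,b_m}$, $y_1\in\rint{c_m,d_m}$ to hit $b_m$ for some pairs and $c_m$ for others, in which case no single point $v$ could serve as the image of the neutral element in the backward transformation inverse to \eqref{unitr}. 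The paper closes this with a short annihilator argument: if $U(x_1,y_1)=f\in\cint{b_m,c_m}$, then for every $z\in\cint{b_m,c_m}$ one gets $U(f,z)=U(x_1,y_1,z)=U(x_1,y_1)=f$ (using $U(y_1,z)=y_1$, which is the cross-term behaviour already pinned down by Lemma \ref{lemint} and monotonicity), so $f$ is the annihilator of $U$ on $\cint{b_m,c_m}$, i.e.\ $f=U(b_m,c_m)$ is the unique admissible gap value and becomes the collapsed point $v$. Without this step, your ``yields an isomorphic uninorm $U_m$'' is an assertion, not a proof; with it inserted, your argument is the paper's.

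Two smaller points. Your claim that commutativity forces $r(r(x))=x$ does not follow: the paper notes explicitly that $U$ need not be non-continuous on the whole graph of $r$, so symmetry of the actual discontinuity set constrains $r$ only where discontinuities occur. Fortunately the involution is never needed, since the two parts of Proposition \ref{propair} already give the pairing in both directions --- simply delete that remark. As for the ``hard part'' you defer at the end (global re-assembly, accumulation points of idempotent endpoints, the choice function $n$), the paper treats it at the same implicit level: the values between distinct summands are forced by Lemma \ref{lemint} plus monotonicity, and the unique gap value $U(b_m,c_m)$ is exactly what fixes $v_m$, respectively $n(b_m)$, in the ordinal sum of Proposition \ref{proorduni}. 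So apart from the missing range/annihilator verification, your proposal matches the paper's proof in both structure and level of detail.
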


\begin{proof}
Since $T_U$ and $C_U$ are continuous the set of idempotent elements $I_U$ of $U$ is closed and thus $\cint{0,e}\setminus I_U = \bigcup\limits_{m\in M} \opint{a_m,b_m}$ and
$\cint{e,1}\setminus I_U = \bigcup\limits_{l\in L} \opint{c_l,d_l}$ for some countable index sets $M,L$ and two systems of open non-empty disjoint intervals $(\opint{a_m,b_m})_{m\in M}$
and $(\opint{c_l,d_l})_{l\in L}.$ From Proposition \ref{propair} it follows that each interval $\opint{a_m,b_m}$ can be paired with the interval $\opint{c_l,d_l} $ for some $l\in L$ such that $r(a_m)=d_l$ and
$r(b_m)=c_l $ and vice-versa, i.e., we can set $L=M$ and obtain two anti-comonotone systems of open non-empty disjoint intervals   $(\opint{a_m,b_m})_{m\in M}$ and $(\opint{c_m,d_m})_{m\in M},$ where
$\opint{a_m,b_m} \subset \cint{0,e}$ and  $\opint{c_m,d_m} \subset \cint{e,1}$ for all $m\in M.$ Since $a_m,b_m,c_m,d_m$ are idempotent points, Lemma \ref{lemint} and monotonicity of $U$ implies that
$U(x,y)=y$ if $x\in \cint{b_m,c_m}$ and $y\in \cint{a_m,d_m} \setminus \cint{b_m,c_m}.$

 Further,
$(\cint{a_m,b_m}\cup \cint{c_m,d_m})^2$ is closed under $U$ and $U$ on $\cint{a_m,b_m}^2$ is a continuous Archimedean t-norm and $U$ on $\cint{c_m,d_m}^2$ is a continuous Archimedean  t-conorm.   In order to use backward transformation inverse to \eqref{unitr}
we have only to show that $$\mathrm{Card}(\mathrm{Ran}(U\vert_{\lint{a_m,b_m}\cup \rint{c_m,d_m}})\cap \cint{b_m,c_m})<2.$$  Assume $U(x_1,y_1)=f$  for some
$x_1\in \lint{a_m,b_m},$ $y_1 \in \rint{c_m,d_m}$ and $f\in \cint{b_m,c_m}.$ Then for any $z\in \cint{b_m,c_m}$ we have $U(f,z)=U(x_1,y_1,z)=U(x_1,y_1)=f.$ Thus $f$ is the annihilator of $U$ on $\cint{b_m,c_m},$
i.e., $f=U(b_m,c_m).$ Now if we transform $U$ on   $(\lint{a_m,b_m}\cup \{U(b_m,c_m)\} \cup\rint{c_m,d_m})^2$  using $f^{-1},$ where $f$ is given in \eqref{unitr}, where
$c=a_m,$ $a=b_m,$ $v=U(b_m,c_m),$ $b=c_m$ and $d=d_m$ and $e\in \ouint$ we obtain a uninorm $U_m$ on $\uint^2$ with the neutral element $e$ such that $T_{U_m}$ and $C_{U_m}$ are Archimedean and
$U_m\in \mathcal{N} \cap \mathcal{U}.$ Then by Proposition \ref{sumstr} the uninorm $U_m$ is representable.

If $\bigcup\limits_{m\in M} \cint{a_m,b_m}=\cint{0,e}$ and $\bigcup\limits_{m\in M} \cint{c_m,d_m}=\cint{e,1}$ the proof is finished. In the opposite case we have
$\cint{0,e}\setminus \bigcup\limits_{m\in M} \cint{a_m,b_m} = \bigcup\limits_{o\in O} \opint{g_o,h_o},$ where   $(\opint{g_o,h_o})_{o\in O}$ is a system of non-empty open intervals, i.e., $O$ is a countable index set.
Then we have $\cint{e,1}\setminus \bigcup\limits_{m\in M} \cint{c_m,d_m} = \bigcup\limits_{o\in O} \opint{r(h_o),r(g_o)}.$ The set $(\lint{g_o,h_o} \cup \{U(h_o,r(h_o))\} \cup \rint{r(h_o),r(g_o)})^2$ is closed under $U$ and thus it is isomorphic to some uninorm $U_o$ such that $T_{U_o}=\min$ and $C_{U_o}=\max.$ Thus by Lemma \ref{lemmm} $U_o$ is internal. Moreover, since $r$ is continuous and strictly decreasing there exists a continuous and strictly decreasing function $v_{U_o}\colon \uint \lran \uint$ such that  $U_o(x,y)=\min(x,y)$ if $y<v_{U_o}(x)$ and  $U_o(x,y)=\max(x,y)$ if $y>v_{U_o}(x),$ i.e., $U_o$ is an s-internal uninorm.
\end{proof}

\begin{corollary}
A uninorm $U\colon \uint^2\lran \uint,$ $U\in \mathcal{U}\cap \mathcal{N}$ is a complete ordinal sum of representable and s-internal uninorms  if and only if there exists a continuous strictly decreasing function $r\colon \uint \lran \uint$ with $r(0)=1,$ $r(e)=e$ and
$r(1)=0$ such that  $U$ is continuous on $\uint\setminus \{(x,r(x))\mid x\in \uint\}.$
\end{corollary}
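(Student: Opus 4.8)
The statement is an equivalence whose two implications are asymmetric in difficulty. The plan is to obtain the direction ``$U$ is continuous off the graph of some such $r$ $\Rightarrow$ $U$ is a complete ordinal sum of representable and s-internal uninorms'' for free, since this is precisely the content of Proposition \ref{charpro} (whose conclusion already produces non-empty interval systems, i.e.\ a complete sum, with representable and s-internal summands). The non-trivial work is the converse, which I would establish by explicitly assembling $r$ from the summands.

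For the forward direction I would fix a complete ordinal sum $U=(\langle a_m,b_m,c_m,d_m,U_m\rangle\mid m\in M)$, so that both $\opint{a_m,b_m}$ and $\opint{c_m,d_m}$ are non-empty, the two systems are anti-comonotone, and $\bigcup_{m\in M}\cint{a_m,b_m}=\cint{0,e}$, $\bigcup_{m\in M}\cint{c_m,d_m}=\cint{e,1}$. On each support I read off a local non-continuity curve. If $U_m$ is representable, then by Proposition \ref{prouni} it is continuous off its two corners and carries a strictly decreasing involution $r_m$ with $r_m(e_m)=e_m$ and $U_m(x,y)=e_m\Leftrightarrow y=r_m(x)$; transporting the branch $\{(x,r_m(x))\mid x\in\cint{0,e_m}\}$ through the isomorphism $f_m$ of \eqref{unitr} gives a continuous strictly decreasing bijection $\rho_m\colon\cint{a_m,b_m}\lran\cint{c_m,d_m}$ with $\rho_m(a_m)=d_m$ and (in the limit) $\rho_m(b_m)=c_m$. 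If $U_m$ is s-internal, the continuous strictly decreasing $v_{U_m}$ supplied by the definition plays the same role and yields a $\rho_m$ with identical boundary behaviour. I would then set $r(x)=\rho_m(x)$ for $x\in\cint{a_m,b_m}$ on $\cint{0,e}$ and extend by the involutive (symmetric) prescription to $\cint{e,1}$.

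The core of the proof is to check that this recipe is a single continuous strictly decreasing function with the prescribed boundary values. Well-definedness at a shared endpoint $b_m=a_{m'}$ reduces to the identity $c_m=d_{m'}$, which holds because anti-comonotonicity reverses the order of the two interval systems, so adjacency on the $\cint{0,e}$ side forces adjacency on the $\cint{e,1}$ side. Strict monotonicity holds inside each $\cint{a_m,b_m}$ and is propagated across summands by this same ordering. For continuity I would avoid a direct estimate and argue by surjectivity: each $\rho_m$ maps onto $\cint{c_m,d_m}$, so $r$ restricted to $\cint{0,e}$ has range $\bigcup_{m}\cint{c_m,d_m}=\cint{e,1}$, and a monotone function onto a full interval has no jumps, hence is continuous. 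The boundary values $r(0)=1$, $r(e)=e$, $r(1)=0$ come from the extreme pairing $0\leftrightarrow 1$ and the meeting of the two systems at $e$. Finally, $U$ is continuous off the graph of $r$ because on each active support its discontinuities lie on the transported curve $\rho_m$ (from Proposition \ref{prouni} for representable summands and from the definition of $v_{U_m}$ for s-internal ones), while on any residual gap square $\cint{b_k,c_k}^2$ with $b_k$ an accumulation point the ordinal-sum formula of Proposition \ref{proorduni} equals $\min$ or $\max$ away from the segment $x+y=b_k+c_k$, which is exactly the portion of the graph of $r$ lying over that gap; continuity is glued across the idempotent boundary points $a_m,b_m,c_m,d_m$ using Lemma \ref{lemint}.

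The step I expect to be the main obstacle is precisely the continuity of $r$ at accumulation points of $\{b_m\}$ and $\{c_m\}$, where infinitely many summands cluster and no single $\rho_m$ describes the value of $r$. The surjectivity argument above is designed to neutralise this: rather than matching one-sided limits of the $\rho_m$ at such a point, it reduces the entire continuity question to two facts already in hand, namely that the idempotent set of $U$ is closed and that the two interval systems exhaust $\cint{0,e}$ and $\cint{e,1}$. Making precise that the transported curves $\rho_m$ really do exhaust $\cint{e,1}$ in the range, including the values taken at isolated idempotent limit points, is the one place where I would have to argue carefully rather than by routine verification.
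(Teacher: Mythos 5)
Your proposal is correct and takes essentially the paper's own (largely implicit) route: the paper obtains this corollary by citing Proposition \ref{charpro} for the ``if'' direction and, for the ``only if'' direction, by assembling $r$ from the summand non-continuity curves exactly as in the unnumbered continuity proposition and the subsequent remark of Section \ref{sec2}, with your surjectivity-of-a-monotone-function device merely making explicit the continuity of $r$ at accumulation points that the paper leaves implicit. The one assertion you state without justification --- that an s-internal summand's curve satisfies $v_{U_m}(0)=1$ and $v_{U_m}(1)=0$, so that $\rho_m$ really surjects onto $\cint{c_m,d_m}$ --- is true but requires the short observation that commutativity of $U_m$ forces the graph of $v_{U_m}$ to be symmetric with respect to the diagonal, hence $v_{U_m}$ is a decreasing involution and in particular a bijection of $\uint$.
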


\begin{corollary}
A uninorm $U\colon \uint^2\lran \uint,$ $U\in \mathcal{U}\cap \mathcal{N}$ is a complete ordinal sum of representable uninorms if and only if there exists a continuous strictly decreasing function $r\colon \uint \lran \uint$ with $r(0)=1,$ $r(e)=e$ and
$r(1)=0$ such that  $U$ is continuous on $\uint\setminus \{(x,r(x))\mid x\in \uint\}$ and $U$ has countably many idempotent points.
\end{corollary}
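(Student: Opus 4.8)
The plan is to prove both implications, treating the two corollaries as differing only by a cardinality-theoretic refinement: the \emph{preceding} corollary already matches the condition ``$r$ exists'' with ``$U$ is a complete ordinal sum of representable \emph{and} s-internal uninorms'', so the whole task reduces to recognizing when the s-internal summands are forced to be absent. I would therefore not reprove the decomposition but appeal directly to Proposition \ref{charpro} (and the earlier proposition on complete ordinal sums of representable uninorms for the $r$-direction), and spend the work on counting idempotent points.

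For the forward implication, suppose $U=(\langle a_k,b_k,c_k,d_k,U_k\rangle\mid k\in K)$ is a complete ordinal sum of representable uninorms. The existence of $r$ is exactly the conclusion of the earlier proposition on complete ordinal sums of representable uninorms, so nothing new is needed. It remains to bound $I_U$. A point $x$ is idempotent for $U$ iff it is idempotent for $T_U$ (when $x\le e$) or for $C_U$ (when $x\ge e$), and by the Remark after Proposition \ref{proorduni} these restrictions are the ordinal sums $T_U=(\langle a_k,b_k,T_{U_k}\rangle)$ and $C_U=(\langle c_k,d_k,C_{U_k}\rangle)$. Since each $U_k$ is representable, its underlying functions $T_{U_k},C_{U_k}$ are strict, hence Archimedean, hence have no idempotent points strictly inside the summand interval. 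Thus $I_U\subseteq\{a_k,b_k,c_k,d_k\mid k\in K\}$, a countable set because $K$ is countable.

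For the reverse implication, assume $r$ exists and $I_U$ is countable. Proposition \ref{charpro} gives $U=(\langle a_m,b_m,c_m,d_m,U_m\rangle\mid m\in M)$ as a complete ordinal sum of representable and s-internal uninorms, and the key step is to exclude any s-internal summand. By the construction in Proposition \ref{charpro}, an s-internal summand occupies a support $\lint{g_o,h_o}\cup\rint{r(h_o),r(g_o)}$ whose left part $\opint{g_o,h_o}$ is a non-degenerate component of $\cint{0,e}\setminus\bigcup_m\cint{a_m,b_m}$; on $\cint{g_o,h_o}^2$ the uninorm agrees with $\min$ (the underlying t-norm being $\min$, by Lemma \ref{lemmm}), so every point of the non-degenerate interval $\cint{g_o,h_o}$ is idempotent. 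A non-degenerate interval is uncountable, so even one s-internal summand would make $I_U$ uncountable, contradicting the hypothesis. Hence no s-internal summand occurs, every summand is representable, and their closures cover $\cint{0,e}$ and $\cint{e,1}$; that is, $U$ is a complete ordinal sum of representable uninorms.

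The main obstacle I expect is making the correspondence ``s-internal summand $\leftrightarrow$ non-degenerate interval of idempotents'' airtight, i.e.\ verifying that the leftover supports $\opint{g_o,h_o}$ produced by Proposition \ref{charpro} are precisely the non-degenerate components of $I_U$ inside $\cint{0,e}$, so that countability of $I_U$ both forbids them here and, conversely, is equivalent to their absence. Everything else is a direct appeal to already-established results: the existence of $r$ from the earlier proposition, and the Archimedean counting of idempotents from the Remark after Proposition \ref{proorduni}.
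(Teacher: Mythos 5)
Your proposal is correct and takes essentially the same route as the paper's own (one-paragraph) proof: both directions reduce to the preceding decomposition results, with the reverse implication obtained by noting that countably many idempotent points excludes any non-degenerate interval of idempotents and hence any s-internal summand in Proposition \ref{charpro}, and the forward implication by observing that in a complete ordinal sum of representable uninorms the idempotent points are exactly the endpoints $a_m,b_m,c_m,d_m$, a countable set. Your write-up is simply a more detailed rendering of the paper's argument, including the same appeal to the Archimedean (indeed strict) underlying operations of representable summands.
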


This result follows from the fact that if there are countably many idempotent points then there is no interval of idempotent points, i.e., $\bigcup\limits_{m\in M} \cint{a_m,b_m}=\cint{0,e}$ and $\bigcup\limits_{m\in M} \cint{c_m,d_m}=\cint{e,1}.$ On the other hand, if $\bigcup\limits_{m\in M} \cint{a_m,b_m}=\cint{0,e}$ and $\bigcup\limits_{m\in M} \cint{c_m,d_m}=\cint{e,1}$ then idempotent points are only $a_m,b_m,c_m,d_m$ for $m\in M$ and since
$M$ is countable also the set of idempotent points is countable.

Finally, let us note that if we have a uninorm $U\colon \uint^2\lran \uint,$ $U\in \mathcal{U}$ and $U\notin \mathcal{N}$ then according to Proposition \ref{pron} the uninorm $U$ is an ordinal sum of a uninorm and a t-norm (t-conorm). This means that $u_1$ ($u_0$) is non-continuous in some point $x>0$ ($x<1$)
which means that there cannot exist a  continuous strictly decreasing function $r\colon \uint \lran \uint$ with $r(0)=1,$ $r(e)=e$ and
$r(1)=0$ such that  $U$ is continuous on $\uint\setminus \{(x,r(x))\mid x\in \uint\}.$

\section{Conclusions}
\label{sec4}

In this paper we have shown that a uninorm is equal to a complete ordinal sum of  representable uninorms and s-internal uninorms if   and only if there exists a continuous strictly decreasing function $r\colon \uint \lran \uint$ with $r(0)=1,$ $r(e)=e$ and
$r(1)=0$ such that  $U$ is continuous on $\uint\setminus \{(x,r(x))\mid x\in \uint\}.$ Moreover, such a uninorm $U$ is a complete  ordinal sum  of representable uninorms if the  set of all idempotent elements of $U$ is countable. We conjecture that a similar result can be shown for all uninorms, where $T_U$ and $C_U$ are continuous. In such a case we conjecture that the set of all points of non-continuity is characterized by a symmetric continuous non-decreasing pseudo-function and each such a uninorm can be decomposed into ordinal sum of representable uninorms, continuous Archimedean t-norms, t-conorms and internal uninorms. However, any uninorm $U\in \mathcal{N}_{\min}$ such that $C_U$ has no non-trivial elements
is irreducible with respect to the ordinal sum construction, i.e., can be expressed only as a trivial ordinal sum with summand on $(\lint{0,e}\cup \rint{e,1})^2,$ and thus modification of the ordinal sum construction, such where summands will be defined on $(\opint{a_m,b_m}\cup \opint{c_m,d_m})^2$
should be assumed in this case. However, we leave this research for future work.

 \vskip0.3cm \noindent{\bf \large Acknowledgement} \hskip0.3cm  This work was supported by grant  VEGA 2/0049/14
 and Program Fellowship of SAS.


\begin{thebibliography}{99}
\bibitem{AFS06}  C. Alsina , M. J.  Frank, B. Schweizer (2006). Associative Functions: Triangular Norms and Copulas, World Scientific, Singapore.

\bibitem{cli} A. H. Clifford (1954). Naturally totally  ordered commutative semigroups. American Journal of Mathematics 76, pp.  631–-646.

\bibitem{ordut} P. Drygas (2010). On properties of uninorms with underlying t-norm and t-conorm
given as ordinal sums.  Fuzzy Sets and Systems 161(2), pp.  149--157.

\bibitem{FYR97} J. C. Fodor, R. R. Yager, A. Rybalov (1997). Structure of uninorms. International Journal of Uncertainty, Fuzziness and Knowledge-based Systems 5, pp. 411--127.

\bibitem{KMP00} E. P. Klement, R. Mesiar, E. Pap (2000). Triangular norms. Dodrecht, The Netherlands: Kluwer Academic
Publishers.

\bibitem{KD69} R. L. Kruse, J. J. Deely (1969).
Joint Continuity of Monotonic Functions.
The American Mathematical Monthly 76(1), pp. 74--76.

\bibitem{jacon}  A. Mesiarov\' a-Zem\' ankov\' a. Multi-polar t-conorms and uninorms, Information Sciences, under review.

\bibitem{genuni}  A. Mesiarov\' a-Zem\' ankov\' a. Generated generalized uninorms and ordinal sum of uninorms,
Int. J. Approximate Reasoning, under review.

\bibitem{uniru}   D. Ruiz, J. Torrens (2006). Distributivity and conditional distributivity of a
uninorm and a continuous t-conorm. IEEE Transactions on Fuzzy Systems 14(2), pp. 180--190.


\bibitem{YR96} R. R. Yager, A. Rybalov (1996). Uninorm aggregation operators. Fuzzy Sets and Systems
80, pp. 111–-120.

\end{thebibliography}
\end{document}